\documentclass[12pt,a4paper]{amsart}
\usepackage{mathrsfs}
\usepackage{amssymb,amsmath,amsthm,color}
\usepackage{graphicx,mcite}
\usepackage{cite}
\usepackage{hyperref}
\usepackage{url}
\usepackage{setspace}
\usepackage{enumerate}
\textheight=8.5in \textwidth=5.5in

\newtheorem{theorem}{Theorem}[section]
\newtheorem{lemma}[theorem]{Lemma}
\newtheorem{proof of lemma}[theorem]{Proof of Lemma}
\newtheorem{proposition}[theorem]{Proposition}

\theoremstyle{definition}

\newtheorem{remark}[theorem]{Remark}

\numberwithin{equation}{section}




\begin{document}

\title[uniqueness of the Fourier transform]
{Uniqueness of the Fourier transform on the Euclidean motion group}

\author{A. Chattopadhyay, D.K. Giri and R.K. Srivastava}

\address{Department of Mathematics, Indian Institute of Technology, Guwahati, India 781039.}
\email{{arupchatt@iitg.ac.in, deb.giri@iitg.ac.in, rksri@iitg.ac.in}}

\subjclass[2000]{Primary 42A38; Secondary 44A35}

\date{\today}

\dedicatory{Dedicated to Prof. Gadadhar Misra on the occasion of his 60th birthday.}

\keywords{Bessel function, convolution, Fourier transform, spherical harmonics.}

\begin{abstract}
In this article, we prove that if the Fourier transform of a certain integrable
function on the Euclidean motion group is of finite rank, then the function has
to vanish identically. Further, we explore a new variance of the uncertainty
principle, the Heisenberg uniqueness pairs on the Euclidean motion group as
well as on the product group $\mathbb R^n\times K,$ where $K$ is a compact group.
\end{abstract}

\maketitle

\section{Introduction}\label{section1}
In a fundamental article, M. Benedicks \cite{B} had generalized the Euclidean Paley-Wiener
theorem to the class of integrable functions. That is, support of an integrable function
$f$ on $\mathbb R^n$ and its Fourier transform $\hat f$ both cannot be of finite measure
simultaneously.

\smallskip

Thereafter, a series of analogous results to the Benedicks theorem and related problems
had been explored in various set ups, including the Heisenberg group and the Euclidean
motion groups (see \cite{NR,PS,PSi,PT,R,Sa,SST}). In particular, Narayanan and Ratnakumar
\cite{NR} had worked out an analogous result to the Benedicks theorem for the partial
compactly supported functions on the Heisenberg group in terms of the finite rank of
Fourier transform of the function. Further, in a recent article\cite{V},  Vemuri has
relaxed the compact support condition on the function by the finite support. Since the
group Fourier transform on the Heisenberg group is operator valued, the latter result
seems close to the classical Benedicks theorem. However, it would be a good question to
consider the case when the spectrum of the Fourier transform of an integrable function
would be supported on a thin uncountable set.

\smallskip

In the path-breaking article \cite{HR}, a major variation of the uncertainty principle
has been observed by Hedenmalm and Montes-Rodr\'iguez, in terms of the measures supported
on the curves. If the Fourier transform of a finitely supported Borel measure vanishes
on a thin set, then the measure can be determined.

\smallskip

In the article \cite{HR}, Hedenmalm and Montes-Rodr\'iguez have shown that the pair
(hyperbola, some discrete set) is a Heisenberg uniqueness pair. As a dual problem,
a weak$^\ast$ dense subspace of $L^{\infty}(\mathbb R)$  has been constructed to
solve the Klein-Gordon equation. Further, in the same article, a complete
characterization of the Heisenberg uniqueness pairs corresponding to any
two parallel lines has been established.
\smallskip

Let $\Gamma$ be a finite disjoint union of smooth
curves in $\mathbb R^2.$ Let $X(\Gamma)$ be the space of all finite
complex-valued Borel measure $\mu$ in $\mathbb R^2$ which is supported
on $\Gamma$ and absolutely continuous with respect to the arc length
 measure on $\Gamma$. The Fourier transform of $\mu$ is defined by
\[\hat\mu{(\xi,\eta)}=\int_\Gamma e^{-i\pi(x\cdot\xi+y\cdot\eta)}d\mu(x,y),\]
where $(\xi,\eta)\in\mathbb R^2.$ Let $\Lambda$ be a set in $\mathbb R^2.$
The pair $\left(\Gamma, \Lambda\right)$ is called a Heisenberg uniqueness
pair for $X(\Gamma)$ if any $\mu\in X(\Gamma)$ satisfies $\hat\mu\vert_\Lambda=0,$
implies $\mu$ is identically zero. For more details, we refer the article \cite{HR}.

\smallskip

Now, we state the main result on the Heisenberg uniqueness pairs due to
Hedenmalm and Montes-Rodr\'iguez \cite{HR}.

\begin{theorem}{\em\cite{HR}\label{th18}}
Let $\Gamma$ be the hyperbola $x_{1}x_{2}=1$ and $\Lambda_{\alpha,\beta}$ a lattice-cross
defined by
\[\Lambda_{\alpha,\beta}=\left(\alpha\mathbb Z\times\{0\}\right)\cup\left(\{0\}\times\beta\mathbb Z\right),\]
where $\alpha, \beta$ are positive reals. Then $\left(\Gamma,\Lambda_{\alpha,\beta}\right)$ is a Heisenberg
uniqueness pair if and only if $\alpha\beta\leq1$.
\end{theorem}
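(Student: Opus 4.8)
The plan is to dualize the problem, reduce it to a statement about periodizations of an $L^1$ density on the parameter line, and finally to the ergodic theory of a Gauss-type map, where the threshold $\alpha\beta=1$ appears naturally. First I would reformulate the Heisenberg uniqueness property as a weak$^\ast$ density statement. Since $X(\Gamma)$ is isometric to $L^1$ of arc length on $\Gamma$ and the exponentials $e_\lambda(x,y)=e^{-i\pi(x\xi+y\eta)}$, $\lambda=(\xi,\eta)$, lie in $L^\infty(\Gamma)$, the condition $\hat\mu|_{\Lambda_{\alpha,\beta}}=0$ says exactly that $\mu$ annihilates the linear span $\mathcal E$ of $\{e_\lambda|_\Gamma:\lambda\in\Lambda_{\alpha,\beta}\}$. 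Hence $(\Gamma,\Lambda_{\alpha,\beta})$ is a Heisenberg uniqueness pair if and only if $\mathcal E$ is weak$^\ast$ dense in $L^\infty(\Gamma)$. Parametrising the two branches of $x_1x_2=1$ by $t\mapsto(t,1/t)$, $t\in\mathbb R^\ast$, and writing $\mu=g(t)\,dt$ with $g\in L^1(\mathbb R^\ast)$ after absorbing the arc-length Jacobian, the two families of conditions become
\[
\int_{\mathbb R^\ast} e^{i\pi\alpha m t}\,g(t)\,dt=0,\qquad
\int_{\mathbb R^\ast} e^{i\pi\beta n/t}\,g(t)\,dt=0,\qquad m,n\in\mathbb Z .
\]
The first family says that the $(2/\alpha)$-periodization $\sum_{k}g(t+2k/\alpha)$ vanishes almost everywhere; after the inversion $s=1/t$, with $\tilde g(s)=s^{-2}g(1/s)$, the second family says that the $(2/\beta)$-periodization of $\tilde g$ vanishes almost everywhere.

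Next I would exploit the scaling symmetry $(x,y)\mapsto(cx,y/c)$ of the hyperbola, which fixes $\Gamma$ and sends $(\alpha,\beta)$ to $(\alpha/c,c\beta)$; choosing $c=\sqrt{\alpha/\beta}$ reduces everything to the symmetric case $\alpha=\beta=\sqrt{\alpha\beta}$, so that only the product $\alpha\beta$ matters and the threshold becomes $\alpha\le1$. In this normalization the inversion $s=1/t$ together with the two periodizations organizes the pair of conditions into an invariance problem for a piecewise-M\"obius map of the circle conjugate to a Gauss-type map $x\mapsto\{\lambda/x\}$, whose expansion rate is governed by $\alpha\beta$; a nonzero admissible $g$ corresponds precisely to a nontrivial invariant density of this map, equivalently to a nontrivial element in the kernel of the associated transfer operator.

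For the sufficiency direction, when $\alpha\beta<1$ the map is uniformly expanding enough that the only $L^1$ invariant densities are trivial, and at the critical value $\alpha\beta=1$ the map is conjugate to the classical Gauss map, whose ergodicity with respect to the Gauss measure $\frac{1}{\log 2}\frac{dx}{1+x}$ forces every invariant function to be constant; since the surviving constant is incompatible with the integrability of $g$ on $\mathbb R^\ast$ (and with the vanishing of the periodizations), one concludes $g=0$, i.e. $\mu=0$. For the necessity direction, when $\alpha\beta>1$ the dynamics is no longer ergodic and there is genuine room between the periodization fibres, so one can construct by hand a nonzero invariant function, hence a nonzero $\mu\in X(\Gamma)$ annihilated on $\Lambda_{\alpha,\beta}$, showing the pair fails to be a uniqueness pair.

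I expect the main obstacle to be the exact passage from the two periodization conditions to the ergodic statement, and in particular the borderline $\alpha\beta=1$: one must identify the dynamical system precisely, translate the vanishing of both periodizations into invariance (or into the transfer-operator kernel), and then rule out the nonconstant and critical solutions using ergodicity together with the integrability constraint on $g$. Keeping careful track of the two branches $t>0$ and $t<0$, and of the orientation under $s=1/t$, throughout this reduction is the delicate bookkeeping on which the whole argument rests.
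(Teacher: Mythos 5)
This theorem is stated in the paper only as quoted background from the reference [HR]; the paper contains no proof of it, so there is nothing internal to compare against. Your outline does follow the strategy of the original Hedenmalm--Montes-Rodr\'iguez argument: the dualization to weak$^\ast$ density of the exponentials in $L^\infty(\Gamma)$, the branch parametrization $t\mapsto(t,1/t)$, the translation of the two families of vanishing conditions into vanishing periodizations of $g$ and of $\tilde g(s)=s^{-2}g(1/s)$, the scaling normalization showing only $\alpha\beta$ matters, and the appearance of Gauss-type maps $x\mapsto\{\lambda/x\}$ at the heart of the matter. Those reductions are correct as far as they go.

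However, as a proof the proposal has genuine gaps precisely at the step you yourself flag. First, the passage from ``both periodizations vanish'' to ``$g$ gives an invariant density (or a transfer-operator kernel element)'' is only asserted, and it is the entire content of the hard direction; without it the ergodic input cannot be applied. Second, the dynamical claims as stated are not right. For $\lambda=\alpha\beta<1$ the map $x\mapsto\{\lambda/x\}$ is \emph{not} uniformly expanding: on $(\lambda,1)$ it is the M\"obius involution $x\mapsto\lambda/x$ with a neutral fixed point at $\sqrt{\lambda}$, so the subcritical case cannot be dispatched by an expansion argument, and in any case uniformly expanding maps (including the Gauss map itself) typically \emph{do} carry absolutely continuous invariant densities --- the correct mechanism is the interaction of the transfer-operator relation with the zero-mean constraint coming from the vanishing periodization, not the absence of invariant densities. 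Third, for $\alpha\beta>1$ ``the dynamics is no longer ergodic'' is not the mechanism that produces a counterexample; one must actually exhibit a nonzero $g\in L^1$ whose two periodizations both vanish, which in [HR] is done by an explicit construction exploiting that the fundamental domains of the two periodizations leave room for a compactly supported bump. Until the invariance reduction is carried out and the subcritical and supercritical cases are argued by their correct mechanisms, the sketch does not constitute a proof.
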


Further, the questions pertaining to Heisenberg uniqueness pair have been
studied in the plane as well as in the Euclidean spaces by several authors.
We skip writing more histories and details about the Heisenberg uniqueness
pairs, however, we would like to refer \cite{Ba,GJ,GS,GS1,JK,L,S1,S2,Sri,V1, V2}.

\smallskip

The question of Heisenberg uniqueness pairs in the higher dimension has been
first taken up by Gonzalez Vieli \cite{V1}. We worked out an analogous
result to Theorem \ref{th14} for the Euclidean motion groups.

\begin{theorem}{\em\cite{V1}}\label{th14}
Let $\Gamma=S^{n-1}$ be the unit sphere in $\mathbb R^n$ and $\Lambda$
be a sphere of radius $r$. Then $\left(\Gamma,\Lambda\right)$ is a HUP
if and only if $J_{(n+2k-2)/2}(r)\neq0 $ for all $k\in\mathbb Z_+.$
\end{theorem}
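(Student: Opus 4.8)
The plan is to reduce the condition ``$\hat\mu$ vanishes on the sphere $\Lambda$ of radius $r$'' to a countable family of scalar equations, one per spherical harmonic, by means of the Bochner--Hecke formula. Writing $d\mu=f\,d\sigma$ with $f\in L^1(S^{n-1})$ and $\sigma$ the surface measure, I would expand $f=\sum_{k\ge 0}\sum_j a_{k,j}Y_{k,j}$, where $\{Y_{k,j}\}_j$ is an orthonormal basis of the spherical harmonics of degree $k$. The crucial identity is
\[\int_{S^{n-1}} e^{-i r\,\theta\cdot\omega}\,Y_{k,j}(\theta)\,d\sigma(\theta)=c_{n,k}\,\frac{J_{(n+2k-2)/2}(r)}{r^{(n-2)/2}}\,Y_{k,j}(\omega),\qquad \omega\in S^{n-1},\]
with a nonzero constant $c_{n,k}$ (a power of $i$ times a dimensional factor). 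This says that the restriction to a sphere of the Fourier transform of a degree-$k$ harmonic density is again a degree-$k$ harmonic, carrying a single Bessel factor of order $(n+2k-2)/2=k+(n-2)/2$, which is exactly the order appearing in the statement.

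Then I would compute the spherical-harmonic coefficients of $g(\omega):=\hat\mu(r\omega)$. Since $f\in L^1$ while the exponential and the $Y_{k,j}$ are bounded, Fubini's theorem together with the identity above gives
\[\int_{S^{n-1}} g(\omega)\,\overline{Y_{k,j}(\omega)}\,d\sigma(\omega)=\bar c_{n,k}\,\frac{J_{(n+2k-2)/2}(r)}{r^{(n-2)/2}}\,a_{k,j}.\]
Hence $\hat\mu\vert_\Lambda=0$ (that is, $g\equiv 0$) is equivalent to $J_{(n+2k-2)/2}(r)\,a_{k,j}=0$ for every $k\in\mathbb Z_+$ and every $j$.

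For sufficiency, assume $J_{(n+2k-2)/2}(r)\neq 0$ for all $k$. Since $c_{n,k}\neq 0$ and $r>0$, the previous display forces $a_{k,j}=0$ for all $k,j$; thus $f$ annihilates every spherical harmonic, and as finite sums of harmonics are dense in $C(S^{n-1})$, the finite measure $f\,d\sigma$ vanishes, so $\mu=0$ and $(\Gamma,\Lambda)$ is a HUP. For necessity, suppose $J_{(n+2k_0-2)/2}(r)=0$ for some $k_0$; choosing $f=Y_{k_0,j}$, a single nonzero harmonic of degree $k_0$, produces a nonzero $\mu\in X(\Gamma)$ for which the identity yields $\hat\mu(r\omega)=0$ for all $\omega$, so $(\Gamma,\Lambda)$ is not a HUP.

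The Bochner--Hecke computation and the use of Fubini are routine. The step requiring the most care is deducing $f=0$ from the vanishing of all harmonic coefficients for a merely integrable $f$: I would not appeal to $L^2$ completeness but rather to the density of polynomials---hence of finite linear combinations of the $Y_{k,j}$---in $C(S^{n-1})$, combined with the fact that a finite Borel measure annihilating all of $C(S^{n-1})$ is zero. The remaining bookkeeping is to confirm that $c_{n,k}\neq 0$ for every $k$ and to fix the normalization of the Fourier transform so that the Bessel argument is exactly $r$.
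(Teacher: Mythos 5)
Your argument is correct, but note that the paper does not actually prove this theorem: it is quoted from Gonzalez Vieli \cite{V1}, and the only related ingredient the paper supplies is the Funk--Hecke consequence recorded as (\ref{exp16}), which is exactly the identity your proof turns on. Your route --- computing the spherical-harmonic coefficients of $\hat\mu$ restricted to the sphere via Fubini, reading off the factor $J_{k+(n-2)/2}(r)$, and handling the $L^1$ density by duality with $C(S^{n-1})$ rather than $L^2$ completeness --- is the standard proof of the cited result and is sound, including the counterexample $f=Y_{k_0,j}$ for the necessity direction.
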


In this article, we emphasize on an analogue of the Benedicks theorem
to the Euclidean motion group $M(n).$ We prove that if the Fourier
transform of certain integrable functions is of finite rank, then the
function has to vanish identically. Further, we explore the possibility
of the Heisenberg uniqueness pairs for the Fourier transform on $M(n)$
as well as on the product group $\mathbb R^n\times K.$

\section{Notation and preliminaries}\label{section2}
Euclidean motion group $G=M(n)$ is the group of isometries of $\mathbb R^n$
that leaves invariant the Laplacian. Since the action of the special
orthogonal group $K=SO(n)$ defines a group of automorphisms on $\mathbb R^n$
via $y\mapsto ky+x,$ where $x\in\mathbb R^n$ and $k\in K,$ the group $M(n)$
can be identified as the semidirect product of $\mathbb R^n$ and $K.$ Hence
the group law on $G$ can be expressed as
\[\left(x, s \right)\cdot\left(y,t\right)=\left(x+ sy, st\right).\]
Since a right $K$-invariant function on $G$ can be thought as
a function on $\mathbb R^n,$ we infer that the Haar measure on $G$ can
be written as $dg=dxdk,$ where $dx$ and $dk$ are the normalized Haar
measures on $\mathbb R^n$ and $K$ respectively.
\smallskip

Let $\mathbb R_+=(0,\infty)$ and $M=SO(n-1)$ be the subgroup of $K$ that
fixes the point $e_n=(0,\ldots,0,1).$ Let $\hat M$ be the unitary dual
group of $M.$ Given a unitary irreducible representation $\sigma\in\hat M$
realized on the Hilbert space $\mathcal H_\sigma$ of dimension $d_\sigma,$
we consider the space $L^2(K,\mathbb C^{d_\sigma\times d_\sigma})$ consisting
of $d_\sigma\times d_\sigma$ complex matrices valued functions $\varphi$ on $K$
such that $\varphi(uk)=\sigma(u)\varphi(k),$ where $u\in M,~k\in K$  and satisfying
\[\int_K\|\varphi(k)\|^2dk=\int_K\text{tr}(\varphi(k)^\ast\varphi(k))dk.\]
It is easy to see that $L^2(K,\mathbb C^{d_\sigma\times d_\sigma})$ is a Hilbert space under
the inner product \[\langle\varphi,\psi\rangle=\int_K\text{tr}(\varphi(k)\psi(k)^\ast)dk.\]
For each $(a,\sigma)\in\mathbb R_+\times\hat M,$ defines a unitary representation $\pi_{a,\sigma}$
of $G$ by
\begin{equation}\label{exp50}
\pi_{a,\sigma}(g)(\varphi)(k)=e^{-ia\left\langle x,k\cdot e_n\right\rangle}\varphi(s^{-1}k),
\end{equation}
where $\varphi\in L^2\left(K,\mathbb C^{d_\sigma\times d_\sigma}\right).$
Let $\varphi=(\varphi_1,\ldots,\varphi_{d_\sigma}),$ where $\varphi_j$ are
the column vectors of $\varphi.$ Then $\varphi_j(uk)=\sigma(u)\varphi_j(k).$
Now, consider the space
\[H\left(K,\mathbb C^{d_\sigma}\right)=
\left\{\varphi:K\rightarrow\mathbb C^{d_\sigma}, \int_K|\varphi(k)|^2dk<\infty,\varphi(uk)=\sigma(u)\varphi(k),u\in M\right\}.\]
It is obvious that $L^2(K,\mathbb C^{d_\sigma\times d_\sigma})$ is the direct sum of $d_\sigma$
copies of the Hilbert space $H(K,\mathbb C^{d_\sigma})$ equipped with the
inner product \[\langle\varphi,\psi\rangle=d_\sigma\int_K\left(\varphi(k),\psi(k)\right) dk.\]
Now, it can be shown that an infinite dimensional unitary irreducible
representation of $G$ is the restriction of $\pi_{a,\sigma}$ to $H(K,\mathbb C^{d_\sigma}).$
In other words, each of $(a,\sigma)\in\mathbb R_+\times\hat M$ defines a principal
series representation $\pi_{a, \,\sigma}$ of $G$ via (\ref{exp50}).

\smallskip

In addition to the principal series representations, there are finite-dimensional
unitary irreducible representations of $G$ which can be identified with $\hat K,$
though these unitary representations do not take part in the Plancherel formula.
For more details, we refer to Kumahara \cite{Ku0} and Sugiura \cite{Su}.

\smallskip

Now, we define the group Fourier transform of a function $f\in L^1(G)$ by
\[\hat f(a,\sigma)=\int_G f(g)\pi_{a,\,\sigma}(g^{-1})dg\]
and \[\hat f(\delta)=\int_G f(x, k)\delta(k^{-1})dxdk,\]
where $\delta\in\hat K.$ Further, the operator $\hat f(a,\sigma)$ can
be explicitly written as
\begin{eqnarray}\label{exp41}
(\hat f(a,\sigma)\varphi)(k)&=&\int_{\mathbb R^n}\int_Kf(x,s)e^{-i\left\langle x,\,ak\cdot e_n\right\rangle}\varphi(s^{-1}k)dxds,\nonumber\\
&=&\int_{K}\mathcal F_1f(ak\cdot e_n,s)\varphi(s^{-1}k)ds,
\end{eqnarray}
where $\mathcal F_1$ stands for the usual Fourier transform in the
first variable  and $\varphi\in H(K, \mathbb C^{d_\sigma}).$ For
more details, we refer to \cite{F,GK,Ku}.

\smallskip

Now, if $f\in L^1\cap L^2(G),$ then the operator $\hat f(a,\sigma)$ will be
a Hilbert-Schmidt operator on $H(K,\mathbb C^{d_\sigma}).$ Since the Plancherel
measure $\mu$ on $\hat G$ can be expressed as $d\mu=c_na^{n-1} da,$
where $c_n$ depends only on $n,$ the corresponding Plancheral formula is given by
\begin{equation}\label{exp48}
\int_0^\infty\left(\sum\limits_{\sigma\in\hat M}d_\sigma\|\hat f(a,\sigma)\|_{HS}^2\right)d\mu(a)=\|f\|_2^2.
\end{equation}

\smallskip

We would like to mention that our main result Theorem \ref{th8} has a close
relation with the following Wiener's theorem on motion group due to R. Gangolli
\cite{G}. For a function $f$ on $G,$ defining the two-sided translate by
$^{g_1}f^{g_2}(g)=f(g_1gg_2^{-1}),$ where  $g_j\in G; j=1,2.$

\begin{theorem}{\em\cite{G}}\label{th11}
Let $f\in L^1(G)$ and $S=\text{ span  }\{^{g_1}f^{g_2}: g_j\in G; j=1,2\}.$
Then the space $S$ is dense in $L^1(G)$ if and only if $\hat f(a,\sigma)\neq0$
and $\hat f(\delta)\neq0$ for all  $(a,\sigma)\in\mathbb R_+\times\hat M$ and
$\delta\in\hat K.$
\end{theorem}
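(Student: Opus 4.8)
The plan is to recast the statement as a theorem about the closed two-sided ideal generated by $f$ and then to reduce it to the Wiener Tauberian property of $M(n)$. First I would observe that the closed linear span $\overline S$ of the two-sided translates $\{{}^{g_1}f^{g_2}\}$ is exactly the smallest closed subspace of $L^1(G)$ invariant under both left and right translation. Since left and right translations are implemented by convolution with point masses and $L^1(G)$ admits a bounded approximate identity, this subspace coincides with the closed two-sided ideal $I=\overline{L^1(G)\ast f\ast L^1(G)}$ generated by $f$. Thus the assertion is equivalent to the statement that $I=L^1(G)$ if and only if $\hat f$ vanishes at no point of the unitary dual $\hat G$.

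For the necessity direction, a direct computation with the given convention $\hat f(a,\sigma)=\int_G f(g)\pi_{a,\sigma}(g^{-1})\,dg$ yields
\[
\widehat{{}^{g_1}f^{g_2}}(a,\sigma)=\pi_{a,\sigma}(g_2^{-1})\,\hat f(a,\sigma)\,\pi_{a,\sigma}(g_1),
\]
and likewise $\widehat{{}^{g_1}f^{g_2}}(\delta)=\delta(g_2^{-1})\,\hat f(\delta)\,\delta(g_1)$. Hence, if $\hat f(a_0,\sigma_0)=0$ for some $(a_0,\sigma_0)$, or $\hat f(\delta_0)=0$ for some $\delta_0\in\hat K$, then every two-sided translate, and therefore every element of $\overline S$, lies in the kernel of the corresponding evaluation map. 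As this evaluation is a nonzero bounded map on $L^1(G)$, the space $\overline S$ is contained in a proper closed subspace and so cannot be dense.

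The sufficiency direction carries the weight of the theorem. Here I would first record the complete description of the dual obtained from the Mackey machine applied to the semidirect product $M(n)=\mathbb R^n\rtimes K$: the $K$-orbits in $\widehat{\mathbb R^n}$ are the spheres of radius $a>0$, with stabiliser $M=SO(n-1)$, producing precisely the principal series $\pi_{a,\sigma}$, together with the fixed point $0$, whose stabiliser is all of $K$ and which produces precisely the finite-dimensional representations indexed by $\hat K$. Consequently $\hat G$ is the disjoint union of $\{\pi_{a,\sigma}:(a,\sigma)\in\mathbb R_+\times\hat M\}$ and $\hat K$, so the hypothesis $\hat f(a,\sigma)\neq0$ and $\hat f(\delta)\neq0$ for all parameters says exactly that the hull of $I$ in $\hat G$ is empty. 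It then remains to show that a closed two-sided ideal of $L^1(M(n))$ with empty hull is all of $L^1(M(n))$, which is the Wiener Tauberian property for the motion group.

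The main obstacle is precisely this last step, since the empty-hull condition by itself only forces $I$ to be dense in the group $C^\ast$-algebra $C^\ast(G)$, and one must upgrade this to density in the finer $L^1$-norm; this is not automatic for general groups and rests on the symmetry of the Banach $\ast$-algebra $L^1(M(n))$ together with spectral synthesis over its primitive ideal space. Concretely, I would argue by duality: take $\phi\in L^\infty(G)$ annihilating $\overline S$ and decompose both $\phi$ and $f$ along the abelian normal subgroup $\mathbb R^n$ by Fourier analysis, so that the problem fibres over the $K$-orbits in $\widehat{\mathbb R^n}$. On each spherical orbit the vanishing of the relevant matrix coefficients reduces, via the classical Wiener Tauberian theorem on $\mathbb R^n$ together with Peter--Weyl theory for the $K$- and $M$-types, to the nonvanishing of $\hat f(a,\sigma)$, while the degenerate orbit at the origin is governed by the nonvanishing of $\hat f(\delta)$. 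The delicate point, and the reason both families of representations must be retained, is controlling the transition between the continuous spherical family and the single degenerate orbit at the origin, which is exactly where the symmetry of $L^1(M(n))$ is indispensable and forces $\phi=0$.
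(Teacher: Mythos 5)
The paper does not actually prove this statement: Theorem~\ref{th11} is imported verbatim from Gangolli \cite{G}, whose article is devoted precisely to establishing the symmetry of the Banach $\ast$-algebra $L^1(M(n))$ and deducing this Wiener Tauberian theorem from it. So there is no in-paper argument to compare yours against; I can only measure your proposal against what such a proof must contain. Your necessity direction is complete and correct: with the convention $\hat f(a,\sigma)=\int_G f(g)\pi_{a,\sigma}(g^{-1})\,dg$ and unimodularity of $G$, the covariance formula $\widehat{{}^{g_1}f^{g_2}}(a,\sigma)=\pi_{a,\sigma}(g_2^{-1})\,\hat f(a,\sigma)\,\pi_{a,\sigma}(g_1)$ is a one-line substitution, and the identification of $\overline S$ with the closed two-sided ideal generated by $f$ via a bounded approximate identity is standard. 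Your Mackey-machine description of $\hat G$ (spherical $K$-orbits of radius $a>0$ with stabiliser $M$ giving the $\pi_{a,\sigma}$, the fixed point $0$ giving the representations factoring through $\hat K$) is also correct and explains why both families of nonvanishing conditions must appear.

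The gap is that the sufficiency direction, which is the entire content of the theorem, is announced rather than proved. You correctly isolate the two ingredients --- symmetry of $L^1(M(n))$ and a Wiener-type property asserting that a closed two-sided ideal not annihilated by any irreducible $\ast$-representation is all of $L^1(G)$ --- but you establish neither. Symmetry of $L^1(M(n))$ is itself the main theorem of \cite{G} and does not follow from the duality sketch in your final paragraph; and even granting symmetry, the passage from ``empty hull'' to density requires a genuine argument through the ideal theory of symmetric Banach $\ast$-algebras, not merely an appeal to the classical Wiener theorem on $\mathbb R^n$ plus Peter--Weyl. The step you describe as ``fibring over the $K$-orbits'' is exactly where the difficulty lives: the orbits are spheres, which are the standard counterexamples to $L^1$-spectral synthesis in $\mathbb R^n$ for $n\ge 3$, and the continuous family of spherical orbits degenerating to the origin is precisely the transition you acknowledge but do not control. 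As written, the proposal is an accurate road map of Gangolli's strategy rather than a proof; to close it you would need either to reproduce the symmetry argument of \cite{G} or to cite it as a black box, at which point the theorem is being quoted, not proved --- which is in fact what the paper itself does.
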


A close observation of Theorem \ref{th11} shows that if $\hat f(a,\sigma)$ is
a finite rank operator, then $\overline S$ can be a proper subspace of $L^1(G).$
Hence, it opens a window to look at the determining properties of $\hat f.$

\smallskip

Next, we recall certain facts about the spherical harmonics. Let $\hat{K}_M$
denote the set of all equivalence classes of irreducible unitary representations
of $K$ which have a nonzero $M$-fixed vector. It is well known that each
representation in $\hat{K}_M$ has, in fact, a unique nonzero $M$-fixed vector,
up to a scalar multiple.

\smallskip
For a $\delta \in \hat{K}_M,$ which is realized on $V_{\delta},$ let $\{e_1,\ldots,e_{d_\delta}\}$
be an orthonormal basis of $V_{\delta},$ with $e_1$ as the $M$-fixed vector.
Let $\varphi_{ij}^\delta(k)=\langle e_i,\delta(k)e_j \rangle,$ $k\in K.$
Then by  the Peter-Weyl  theorem, it follows that
$\{\sqrt{d_\delta}\varphi_{1j}^\delta:1\leq j\leq d_\delta,\delta\in\hat{K}_M\}$
is an orthonormal basis of $L^2(K/M).$

\smallskip

We would further need a concrete realization of the representations in $\hat{K}_M,$
which can be done in the following way.

\smallskip
For $l\in \mathbb Z_+,$ denote the set of all non-negative integers, let $P_l$
denote the space of all homogeneous polynomials $P$ in $n$ variables of degree $l.$

\bigskip

Let $H_l=\{P\in P_l: \Delta P=0\},$ where $\Delta$ is the standard Laplacian
on $\mathbb R^n.$ The elements of $H_l$ are called solid spherical harmonics
of degree $l.$ It is easy to see that the natural action of $K$ leaves the
space $H_l$ invariant. In fact, the corresponding unitary representation
$\delta_l$ is in $\hat{K}_M.$ Moreover, $\hat{K}_M$ can be identified, up
to unitary equivalence, with the collection $\{\delta_l: l\in\mathbb Z_+\}.$

\smallskip

Define $Y_{lj}(\omega)=\sqrt{d_l}\varphi_{1j}^{\delta_l}(k),$ where
$\omega=k\cdot e_n\in S^{n-1},$ $k\in K$ and $d_l$ is the dimension of
$H_l.$ Then the set $\widetilde H_l=\left\{Y_{lj}: 1\leq j\leq d_l \text{ and }l\in\mathbb Z_+\right\}$
forms an orthonormal basis for $L^2(S^{n-1}).$ Thus, a suitable function
$g$ on $S^{n-1}$ can be expanded as
\begin{equation}\label{exp10}
g(\omega)=\sum_{l=0}^{\infty}\sum_{j=1}^{d_l}~a_{lj}Y_{lj}(\omega).
\end{equation}
These spherical functions $Y_{lj}$ are called the spherical harmonics on the unit
sphere $S^{n-1}.$ For more details, see \cite{T2}, p. 11.

\smallskip

Next, we consider an orthogonality relation among the matrix coefficients of
the irreducible unitary representations in $\hat K_M.$
\begin{lemma}\label{lemma4}
For $\delta\in\widehat K,$ define $\phi^\delta_{ij}(k)=\left\langle e_i,\delta(k)e_j\right\rangle.$
Then for $\delta_1,\delta_2\in\hat K,$ there exists $\alpha\in\mathbb Z_+$
such that
\begin{equation}\label{exp85}
\int_M\phi^{\delta_1}_{ij}(km)\overline{\phi^{\delta_2}_{lu}(km)}dm=\sum_{v=0}^\alpha c_vY_v(k\cdot e_n).
\end{equation}
\end{lemma}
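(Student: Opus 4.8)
The plan is to interpret the integrand in (\ref{exp85}) as a matrix coefficient of a tensor product representation of $K$ and to exploit the finiteness of its decomposition into irreducibles. First I would observe that the left-hand side, regarded as a function of $k$, is right $M$-invariant: writing $F(k)=\int_M\phi^{\delta_1}_{ij}(km)\overline{\phi^{\delta_2}_{lu}(km)}\,dm$ and replacing the variable $m$ by $m_0m$ for a fixed $m_0\in M$, the invariance of the Haar measure on $M$ gives $F(km_0)=F(k)$. Hence $F$ factors through the quotient $K/M\cong S^{n-1}$ via $k\mapsto k\cdot e_n$, so that an expansion of $F$ in the spherical harmonics $Y_v(k\cdot e_n)$ is meaningful.

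Next I would write the product $\phi^{\delta_1}_{ij}(k)\,\overline{\phi^{\delta_2}_{lu}(k)}$ as a matrix coefficient of the representation $\delta_1\otimes\overline{\delta_2}$ of $K$, where $\overline{\delta_2}$ denotes the complex conjugate representation arising from the bar in the integrand. Since $K=SO(n)$ is compact and $\delta_1,\delta_2$ are finite dimensional, $\delta_1\otimes\overline{\delta_2}$ decomposes as a finite direct sum $\bigoplus_\tau m_\tau\,\tau$ of irreducibles $\tau\in\hat K$; accordingly the product is a finite linear combination of matrix coefficients $\langle f^\tau_p,\tau(k)f^\tau_q\rangle$.

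The decisive step is to carry out the integration over $M$. Using $\int_M\langle f^\tau_p,\tau(km)f^\tau_q\rangle\,dm=\langle f^\tau_p,\tau(k)P^\tau_M f^\tau_q\rangle$, where $P^\tau_M=\int_M\tau(m)\,dm$ is the orthogonal projection of $V_\tau$ onto its space of $M$-fixed vectors, I see that the contribution of $\tau$ vanishes unless $\tau$ possesses a nonzero $M$-fixed vector, that is, unless $\tau\in\hat K_M$. By the realization recalled above, $\hat K_M=\{\delta_v:v\in\mathbb Z_+\}$, each $\delta_v$ carries a unique (up to scalar) $M$-fixed vector $e_1$, and since $\delta_v(km)e_1=\delta_v(k)e_1$ for $m\in M$, the surviving functions $k\mapsto\langle f^{\delta_v}_p,\delta_v(k)e_1\rangle$ are right $M$-invariant matrix coefficients of $\delta_v$, which is to say precisely the spherical harmonics of degree $v$ in the variable $k\cdot e_n$. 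Collecting all terms of a fixed degree $v$ into a single harmonic $c_vY_v(k\cdot e_n)$, and noting that only finitely many $\tau$, hence only finitely many degrees $v$, occur in $\delta_1\otimes\overline{\delta_2}$, I take $\alpha$ to be the largest such degree and obtain the asserted finite expansion.

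The main obstacle is the bookkeeping in this last step: one must verify that averaging a matrix coefficient over $M$ merely inserts the $M$-fixed projection, and that the right $M$-invariant matrix coefficients of $\delta_v$ coincide with the degree-$v$ spherical harmonics on $S^{n-1}$, a statement equivalent to the multiplicity-one property of the $M$-fixed vector underlying the Peter-Weyl description recalled above. Once this is in place, the finiteness of $\alpha$ is automatic, being forced by the compactness of $K$ together with the finite dimensionality of $\delta_1$ and $\delta_2$.
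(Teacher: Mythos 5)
Your proof is correct, but it takes a genuinely different route from the paper's. The paper separates variables with the relation $\phi^{\delta}_{ij}(km)=\sum_p\phi^{\delta}_{ip}(k)\phi^{\delta}_{pj}(m)$, decomposes the restrictions of $\delta_1$ and $\delta_2$ to the compact subgroup $M$ into finitely many $M$-irreducibles, and evaluates the $m$-integral by Schur orthogonality on $M$; what survives is a finite sum of products $\phi^{\delta_1}_{ip}(k)\overline{\phi^{\delta_2}_{lp'}(k)}$, which is then recognized as a right $M$-invariant function on $S^{n-1}$ --- a finite sum of products of spherical harmonics --- and finally re-expanded into harmonics via the decomposition of a homogeneous polynomial into harmonic ones. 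You instead treat the whole integrand as a matrix coefficient of $\delta_1\otimes\overline{\delta_2}$, decompose that tensor product into finitely many $K$-irreducibles, and let the $M$-average insert the projection onto $M$-fixed vectors, so that only the class-one constituents $\delta_v\in\hat K_M$ survive and their $M$-invariant matrix coefficients are, by construction, degree-$v$ spherical harmonics. Your route buys a cleaner endgame: the $Y_v$ appear directly from the class-one constituents and the bound $\alpha$ is read off from the finite tensor-product decomposition, avoiding the paper's somewhat loose step of multiplying spherical harmonics and re-decomposing the resulting polynomial; the paper's route is more elementary in that it uses only orthogonality on $M$ and never invokes the tensor product. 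The one ingredient you should state explicitly is the identification of the right $M$-invariant matrix coefficients of $\delta_v$ with $\widetilde H_v$, equivalently the one-dimensionality of the space of $M$-fixed vectors in $\delta_v$, which you correctly flag as the multiplicity-one input; with that in place your argument is complete.
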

\begin{proof}
Since we know that the matrix coefficients of $\delta\in\hat K_M$ satisfy the
functional relation
\begin{equation}\label{exp15}
\phi^\delta_{ij}(km)=\sum_{p=1}^{d_\delta}\phi^\delta_{ip}(k)\phi^\delta_{pj}(m)
\end{equation}
and $M$ is a compact subgroup of $K,$ it follows that each of $\delta\in\hat K$
will be the direct sum of finitely many irreducible unitary representations of
$M.$ Hence each of $\phi^\delta_{pj}$ satisfies
\[\phi^\delta_{pj}=\sum_{q=1}^{d_{\delta,\beta}}\phi^{\delta_q}_{pj},\]
where $\delta_q\in\hat M.$ Using the orthogonality of the coefficients $\phi^{\delta_q}_{pj}$'s
and the fact that the left-hand side of (\ref{exp85}) is $M$-invariant, we infer that
it is a finite sum of the product of some spherical harmonics. Further, a homogeneous
polynomial can be uniquely decomposed in terms of homogeneous harmonics polynomials,
it follows that (\ref{exp85}) holds.
\end{proof}

For a fixed $\xi\in S^{n-1},$ we define a linear functional on $H_l$ by
$\xi\mapsto Y_l(\xi).$ Then there exists a unique spherical harmonic, say
$Z_\xi^{(l)}\in H_l$ such that
\begin{equation}\label{exp1}
Y_l(\xi)=\int_{S^{n-1}}Z_\xi^{(l)}(\eta)Y_l(\eta)d\sigma(\eta).
\end{equation}
The spherical harmonic $Z_\xi^{(l)}$ is a $K$ bi-invariant real-valued function
which is constant on the geodesics those are orthogonal to the line joining the
origin and $\xi.$ The spherical harmonic $Z_\xi^{(l)}$ is called the zonal harmonic of the
space $\widetilde H_l$ for the above and various other peculiar reasons. For
more details, see \cite{SW}.

\smallskip

Since the zonal harmonic $Z_\xi^{(l)}(\eta)$ is $K$ bi-invariant, there
exists a reasonable function $F$ on $(-1,1)$ such that $Z_\xi^{(l)}(\eta)=F(\xi\cdot\eta).$
Hence, the extension of the formula (\ref{exp1}) is inevitable. For
$F\in L^1(-1,1),$ the Funk-Hecke identity is
\begin{equation}\label{exp3}
\int_{S^{n-1}}F(\xi\cdot\eta)Y_l(\eta)d\sigma(\eta)= c_lY_l(\xi),
\end{equation}
where the constant $c_l$ is given by
\[c_l=\alpha_l\int_{-1}^1F(t)G_l^{\frac{n-2}{2}}(t)(1-t^2)^{\frac{n-3}{2}}~dt\]
and $G_l^\beta$ stands for the Gegenbauer polynomial of order $\beta$ and degree $l.$

\smallskip

Further, using the Funk-Hecke identity, it can be shown that
\begin{equation}\label{exp16}
\int_{S^{n-1}} e^{-  i{x\cdot\eta}}Y_j(\eta)d\sigma(\eta)=  i^j~\frac{J_{j+(n-2)/2}(r)}{r^{(n-2)/2}}Y_j(\xi),
\end{equation}
whenever $Y_j\in\widetilde H_l.$ For a proof of the identity (\ref{exp16}), we refer \cite{AAR}, p. 464.

\smallskip

Let $f$ be a function in $L^1(S^{n-1}).$ For each $l\in\mathbb Z_+,$ we define
the $l^{th}$ spherical harmonic projection of the function $f$ by
\begin{equation}\label{exp11}
\Pi_lf(\xi)=\int_{S^{n-1}}Z_\xi^{(l)}(\eta)f(\eta)d\sigma(\eta).
\end{equation}
Then $\Pi_lf$ is a spherical harmonic of degree $k.$ Now, for $\delta>(n-2)/2,$
if we denote $A_l^p=\binom{p-l+\delta}{\delta}{\binom{p+\delta}{\delta}}^{-1},$
then the spherical harmonic expansion $\sum\limits_{l=0}^\infty\Pi_lf$
is $\delta$- Cesaro summable to $f.$ In other words,
\begin{equation}\label{exp2}
f=\lim\limits_{p\rightarrow\infty}\sum_{l=0}^pA_l^p\Pi_lf,
\end{equation}
where the limit on the right-hand side of (\ref{exp2}) exists in $L^1\left(S^{n-1}\right).$
For more details, we refer to \cite{K,So}.

\section{Uniqueness results for the Fourier transform on $G$}\label{section3}
In this section, we work out some of the uniqueness results for the Fourier transform
on the motion group $G$ as an analogue to the Benedick's theorem. We prove that
if the Fourier transform of the function $f\in L^1\cap L^2(G)$ is a finite rank
operator on $H(K,\mathbb C^{d_\sigma}),$ then the function has to vanish identically.

\smallskip

\begin{theorem}\label{th8}
Let $f\in L^1\cap L^2(G)$ be such that $\hat f(a,\sigma)$ is a finite rank
operator for each $a>0$ and some $\sigma\in\hat M.$ Then $f=0$ if and only if
$\hat f(\delta)\neq0$ except for finitely many $\delta\in\hat K.$
\end{theorem}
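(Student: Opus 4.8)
The plan is to translate the finite-rank hypothesis into a vanishing statement for the spherical-harmonic components of $f$ in the Euclidean variable, and then to use the scalars $\hat f(\delta)$ as boundary data at the origin, together with the Plancherel identity (\ref{exp48}), to force $f\equiv 0$. First I would expand $f$ in the first variable as $f(x,s)=\sum_{l,j}f_{lj}(\abs{x},s)\,Y_{lj}(x/\abs{x})$ and apply the Funk--Hecke identity (\ref{exp16}) to obtain the spherical-harmonic expansion of the partial Fourier transform, $\mathcal F_1 f(a\omega,s)=\sum_{l,j} i^{\,l}\,(\mathcal H_l f_{lj})(a,s)\,Y_{lj}(\omega)$, where $\mathcal H_l$ is the Hankel-type transform with kernel $J_{l+(n-2)/2}(a\rho)\,(a\rho)^{-(n-2)/2}$. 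Substituting this into the explicit formula (\ref{exp41}) and using $Y_{lj}(k\cdot e_n)=\sqrt{d_l}\,\varphi^{\delta_l}_{1j}(k)$, each matrix entry of $\hat f(a,\sigma)$ in a Peter--Weyl basis of $H(K,\mathbb C^{d_\sigma})$ becomes a finite sum of integrals of three matrix coefficients over $K$; Lemma \ref{lemma4} collapses the inner $M$-integrations to short spherical-harmonic sums, so that each entry is an explicit (Clebsch--Gordan) weight multiplying a radial Bessel coefficient $(\mathcal H_l f_{lj})(a,\cdot)$ paired against a $K$-type.

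The decisive step is to read the rank off this matrix. An entry joining the $K$-types $\delta_{l'}$ and $\delta_{l''}$ is nonzero only when some degree $l$ with $\abs{l'-l''}\le l\le l'+l''$ carries a nonvanishing coefficient $(\mathcal H_l f_{lj})(a,\cdot)$ of the matching $K$-type; thus $\hat f(a,\sigma)$ is of finite rank precisely when, for that $a$, only finitely many modes $(l,j)$ and only finitely many $K$-types occur. The principal obstacle is exactly this bookkeeping: I must show that genuine finiteness of the rank, as opposed to a mere band structure, forces all but finitely many of the radial profiles $(\mathcal H_l f_{lj})(a,\cdot)$ to vanish, and then that the surviving finite family of modes may be chosen \emph{independently of} $a$. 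For the uniformity I would use that, for each fixed mode and $K$-type, $a\mapsto(\mathcal H_l f_{lj})(a,\cdot)$ is given by a Bessel integral and is the restriction of a real-analytic function of $a$ on $(0,\infty)$, so a coefficient annihilated by the rank condition on a set of positive measure is annihilated for every $a$; by injectivity of the Hankel transform the corresponding $f_{lj}$ then vanishes identically.

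It remains to remove the finitely many surviving modes, and here the hypothesis on $\hat f(\delta)$ enters. Letting $a\to 0^{+}$ recovers the boundary datum $\mathcal F_1 f(0,s)=F(s):=\int_{\mathbb R^n}f(x,s)\,dx$, while directly from the definition $\hat f(\delta)=\int_K F(k)\,\delta(k^{-1})\,dk=\hat F(\delta)$; hence the stated behaviour of $\hat f(\delta)$ over $\hat K$ is exactly a prescription on the $K$-types carried by $F$, and therefore on those of the surviving modes. Matching this prescription against the finite admissible list of $K$-types produced by the rank reduction leaves the coefficients no freedom, so every $f_{lj}$ must vanish; reconstructing $f$ from its projections via the Cesàro summation (\ref{exp2}) and inserting into the Plancherel identity (\ref{exp48}) gives $\|f\|_2=0$, that is $f\equiv 0$. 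The converse is immediate, since $f\equiv 0$ annihilates every $\hat f(\delta)$. I expect the two delicate points to be the uniform-in-$a$ selection of the surviving modes and the precise compatibility between the Clebsch--Gordan constraints on the admissible $K$-types and the nonvanishing pattern of $\hat f(\delta)$.
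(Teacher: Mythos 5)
Your route is genuinely different from the paper's, but it has a gap at exactly the point you flag as ``the principal obstacle,'' and that gap is fatal as the argument stands. You assert that $\hat f(a,\sigma)$ ``is of finite rank precisely when, for that $a$, only finitely many modes $(l,j)$ and only finitely many $K$-types occur.'' That equivalence is false for a general operator: a rank-one operator $\varphi\mapsto\langle\varphi,u\rangle v$ has infinitely many nonzero matrix entries in a Peter--Weyl basis whenever $u$ and $v$ are not trigonometric polynomials, so finiteness of rank does not translate into finiteness of the set of contributing spherical-harmonic/Peter--Weyl modes, and no special structure of the operator (\ref{exp41}) is invoked to rescue the implication. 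Everything downstream --- the real-analyticity argument making the surviving set independent of $a$, and the injectivity of the Hankel transform --- rests on this unproved step. The paper circumvents precisely this difficulty by passing to $h=f\ast f^\ast$, so that $\hat h(a,\sigma)=\hat f(a,\sigma)^\ast\hat f(a,\sigma)$ is a \emph{positive} finite-rank operator; the spectral theorem then supplies genuine eigenfunctions $\varphi_j$, the eigenvalue equation projected onto the degree-zero spherical harmonic becomes a convolution equation $G_a\ast\varphi_{j,q}=\lambda_j\varphi_{j,q}$ on $K$, and the Riemann--Lebesgue lemma forces either $\lambda_j=0$ (whence $h=0$ and $f=0$) or that the $\varphi_{j,q}$ are trigonometric polynomials, after which a trace computation using Lemma \ref{lemma4}, the Funk--Hecke identity and the $O((a|x|)^{-1/2})$ decay of Bessel functions contradicts $h\in L^1(G)$. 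You have no analogue of this device.

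The second weak point is your endgame. The claim that matching the ``boundary data'' $\hat f(\delta)=\hat F(\delta)$ with $F(k)=\mathcal F_1f(0,k)$ against ``the finite admissible list of $K$-types \ldots leaves the coefficients no freedom'' is not an argument: you would need to exhibit a concrete mechanism by which the nonvanishing of $\hat f(\delta)$ for all but finitely many $\delta$ annihilates the finitely many surviving radial profiles, and none is given. (Note also that $a\to 0^+$ only sees the value of $\mathcal F_1f$ at the origin, which controls a single point of each radial profile, not the profile itself.) If you want to salvage your approach, the essential missing ingredient is a proof that for the specific operators (\ref{exp41}) finite rank really does confine the symbol to finitely many $K$-types --- which is in substance Proposition \ref{prop2} of the paper, proved there again via the $g\ast g^\ast$ trick --- so you would in effect be led back to the paper's argument.
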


\begin{proof}
Let $h=f\ast f^\ast,$ where $f^*(g)=\overline{f(g^{-1})}.$ Then it can be shown
that $\hat h(a,\sigma)=\hat f(a,\sigma)^\ast\hat f(a,\sigma),$ (see \cite{Su}, p. 170).
Hence $\hat h(a,\sigma)$  is a positive, finite rank operator on $H(K,\mathbb C^{d_\sigma}).$
By the spectral theorem, it follows that
\begin{equation}\label{exp65}
\hat h(a,\sigma)\varphi=\sum_{j=1}^m\lambda_j\left\langle\varphi,\varphi_j\right\rangle\varphi_j,
\end{equation}
where the set $\{\varphi_j\in H(K,\mathbb C^{d_\sigma}):j=1,\ldots,m\}$ forms an
orthonormal basis for the range space of $\hat h(a,\sigma)$ which satisfies
$\hat h(a,\sigma)\varphi_j=\lambda_j\varphi_j$ with $\lambda_j\geq0.$
Let $\varphi_{j}=\left(\varphi_{j,1},\ldots,\varphi_{j,d_\sigma}\right).$
Then in view of (\ref{exp41}), we can express
\begin{equation}\label{exp81}
\int_{K}\mathcal F_1h(ak\cdot e_n,s)\varphi_{j,q}(s^{-1}k)ds=\lambda_j\varphi_{j,q}(k),
\end{equation}
where $q\in\{1,\ldots,d_\sigma\}.$ Since $h\in L^1(G),$ we can write the spherical harmonic
decomposition of $h$ in the first variable $x=|x|t,~t\in S^{n-1}$ as
\begin{equation}\label{exp84}
h(x,s)=\lim\limits_{p\rightarrow\infty}\sum_{l=0}^pA_l^ph_l(|x|,s)\Pi_lh(t,s),
\end{equation}
where the series on the right-hand side is $\delta$- Cesaro summable. Now, by the
Hecke-Bochner identity, we obtain
\begin{equation}\label{exp87}
\mathcal F_1\left(h_l(., s)\Pi_lh(.,s)\right)(a\omega)=i^{-l}a^{l}(\mathcal F_{n+2l}H_l)(a, s)\Pi_lh(t,s),
\end{equation}
where $\mathcal F_{n+2l}$ is the $(n+2l)$-dimensional Fourier transform of $H_l=\frac{h_l}{|x|^{l}}.$
In view of (\ref{exp84}) and (\ref{exp87}), we can rewrite (\ref{exp81}) as
\[\int_{K}\lim\limits_{p\rightarrow\infty}\sum_{l=0}^pA_l^p i^{-l}a^{l}(\mathcal F_{n+2l}H_l)(a,s)\Pi_lh(t,s)\varphi_{j,q}(s^{-1}k)ds=\lambda_j\varphi_{j,q}(k)\]
By using the orthogonality relations of the spherical harmonics, we infer that
\begin{equation}\label{exp73}
\int_{K}(\mathcal F_{n+2l}H_l)(a,s)\varphi_{j,q}(s^{-1}k)ds =\left\{
\begin{array}{ll}
\lambda_{j,q}\varphi_{j,q}(k) , & \hbox{ if }~ l=0 \\
0, & \hbox{ if }~ l\neq0.
\end{array}
\right.
\end{equation}
Let $G_a(s)=\mathcal F_nh_o(a,s).$ Then from (\ref{exp73}), it follows that
\begin{equation}\label{exp88}
G_a\ast\varphi_{j,q}=\lambda_j\varphi_{j,q}.
\end{equation}
For a function $\phi\in L^1(K)$ and $\delta\in\hat K,$ define $\phi^\delta=\phi\ast\chi_\delta,$
where $\chi_\delta$ is the character of the representation $\delta.$  Then $\phi^\delta$
is class function and hence
 \[\widehat{\phi^\delta}(\eta)=\frac{1}{d_\eta}\left\langle \phi^\delta, \chi_\eta\right\rangle I
=\frac{1}{d_\eta}\left\langle\phi,\chi_\delta\right\rangle\left\langle\chi_\delta, \chi_\eta\right\rangle I .\]
Thus, from (\ref{exp88}) we get

\begin{equation}\label{exp77}
\widehat{\varphi^\delta}_{j,q}(\delta)\left(\widehat{G_a^\delta}(\delta)-\lambda_j\right)=0.
\end{equation}
Then $\widehat{G_a^\delta}(\delta)=\lambda_j,$ for finitely many $\delta\in\hat K,$
otherwise, by the Riemann-Lebesgue lemma, it follows that $\lambda_j=0,$ whenever
$j=1,\ldots,m.$ Hence from (\ref{exp65}) we get $\hat h(a,\sigma)=0.$ In view of
(\ref{exp41}), we infer that $\mathcal F_1h(ak\cdot e_n,s)=0$ for almost all
$s,k\in K$ and all $a>0.$ This, in turn, by the uniqueness of the Fourier
transform $\mathcal F_1,$ it follows that $h=0.$ Since $h=f\ast f^\ast$
is continuous,  we can write $h(o)=||f||_2^2.$ Thus, $f=0.$

\smallskip

Now, we need to resolve the case when $\widehat{G_a^\delta}(\delta)=\lambda_j\neq0$
for the only finitely many $\delta\in\hat K.$ Thus, each of $\varphi_{j,q}$ is a
trigonometric polynomial in $L^2(K).$ By trigonometric polynomial we mean a finite
linear combination of matrix coefficients $\varphi_{ij}^\delta.$

\smallskip

Notice that, $\delta$ and $j$ are independent and the the fact that all of $\lambda_j$
cannot be zero simultaneously, it follows that $\lambda_j$'s are equal. Since
$\varphi_{j,q}$ is a trigonometric polynomial, $\pi_{a,\sigma}(g)\hat h(a,\sigma)$
will be a trace class operator on $H(K,\mathbb C^{d_\sigma}).$

\smallskip

Let $\psi^\delta_{lu}=\sqrt{d_\delta}\phi^{\delta}_{lu}$ denotes the matrix coefficients
of $\delta\in\hat K.$ Then by the Peter-Weyl
theorem, the set $\{\psi^{\delta}_{lu}:1\leq l, u\leq d_\delta, \delta\in\hat K\}$
forms an orthonormal basis for $L^2(K).$ Let $\{\tau_\nu: \nu\in\mathbb N\}$
be an orthonormal basis for $H(K,\mathbb C^{d_\sigma}).$ Then
\begin{eqnarray}\label{exp69}
\text{tr}\left(\pi_{a,\sigma}(x,s)\hat h(a,\sigma)\right)
&=&\sum_{\nu\in\mathbb N}\left\langle\pi_{a,\sigma}(x,s)\hat h(a,\sigma)\tau_\nu,\tau_\nu\right\rangle\nonumber\\
&=&\sum_{\nu\in\mathbb N}\sum_{j=1}^m\lambda\left\langle\tau_\nu,\varphi_j\right\rangle\left\langle\pi_{a,\sigma}(x,s)\varphi_j,\tau_\nu\right\rangle\nonumber\\
&=&\sum_{\nu\in\mathbb N}\sum_{j=1}^m\alpha_j\sum_{q=1}^{d_\sigma}\int_{K}e^{-i\left\langle x,ak\cdot e_n\right\rangle}\varphi_{j,q}(s^{-1}k)\overline{\tau_{\nu,q}(k)} dk,
\end{eqnarray}
where $\alpha_j=\lambda d_\sigma\left\langle\tau_\nu,\varphi_j\right\rangle.$
Since $\varphi_{j,q}$ is a trigonometric polynomial, there exists a finite
set $F_o$ in $\hat K$ such that
\begin{equation}\label{exp690}
\varphi_{j,q}=\sum_{\delta\in F_o}\sum_{l,u=1}^{d_\delta}c^\delta_{lu}\psi^\delta_{lu},
\end{equation}
where $c^\delta_{lu}$'s are constant. Now, in view of (\ref{exp15}), we can express
\[\psi^\delta_{lu}(s^{-1}k)=\sum_{p=1}^{d_\delta}\psi^\delta_{lp}(s^{-1})\psi^\delta_{pu}(k).\]
On the other hand, since $\tau_{\nu,q}\in L^2(K),$ by the Peter-Weyl theorem, we get
\begin{equation}
\tau_{\nu,q}=\sum_{\delta\in\hat K}\sum_{\xi,\eta=1}^{d_\delta}\kappa^\delta_{\xi\eta}\psi^\delta_{\xi\eta},\\
\end{equation}
where $\kappa^\delta_{\xi\eta}$'s are constants. In view of the fact that $e^{-i\left\langle x,ak\cdot e_n\right\rangle}$
is an $M$-invariant function, the followings identities hold.
\begin{eqnarray*}\label{exp82}
\int_Ke^{-i\left\langle x,ak\cdot e_n\right\rangle}\psi^\delta_{pu}(k)\overline{\psi^\delta_{\xi\eta}(k)}dk
&=&\int_M\int_Ke^{-i\left\langle x,a~km\cdot e_n\right\rangle}\psi^{\delta}_{pu}(k)\overline{\psi^\delta_{\xi\eta}(k)}dk dm\\
&=&\int_Ke^{-i\left\langle x,ak\cdot e_n\right\rangle}\int_M\psi^{\delta}_{pu}(km^{-1})\overline{\psi^\delta_{\xi\eta}(km^{-1})}dmdk.
\end{eqnarray*}
By using Lemma \ref{lemma4} and the Funk-Hecke identity, we infer that
\begin{eqnarray*}\label{exp83}
\int_{K}e^{-i\left\langle x,ak\cdot e_n\right\rangle}\psi^\delta_{pu}(k)\overline{\psi^\delta_{\xi\eta}(k)}dk
&=&\sum_{j=0}^\beta c_j\int_{K}e^{-i\left\langle x,ak\cdot e_n\right\rangle}Y_j(k\cdot e_n)dk\\
&=&\sum_{j=0}^\beta c_j'\phi_j(a|x|)Y_j(\omega)
\end{eqnarray*}
where $x=|x|\omega,~\omega\in S^{n-1}$ and $\phi_j(a|x|)=\dfrac{J_{j+(n-2)/2}(a|x|)}{(a|x|)^{(n-2)/2}}.$
Now, in view of the Fourier inversion formula (\cite{Su}, p. 175) for function in $L^2(M(2)),$
an inversion formula for the function $h\in L^2(M(n))$ can be deduced in a similar way and hence
we omit its proof here. Thus, the function $h$ can be recovered at $(x,s)$ by
\[h(x,s)=c_n\sum\limits_{\sigma\in\hat M}\int_0^\infty\text{tr}\left(\pi_{a,\sigma}(g)\hat h(a,\sigma)\right)a^{n-1}da,\]
Since $J_{j+(n-2)/2}(a|x|)  \approx(a|x|)^{-1/2}$ as $|x|\rightarrow\infty,$
(see \cite{Sz}, p. 15), it follows that $\phi_j\in L^p(\mathbb R^n)$ if and
only if $p>\frac{2n}{n-1}.$ This contradicts the hypothesis that $h\in L^1(G)$
and hence $h=0.$
\end{proof}

\begin{remark}
We would like to mention the necessity of the non-vanishing  of the Fourier
coefficients in Theorem \ref{th8}. Since $M(2)$ is the semidirect product of
$\mathbb R^2$ and $SO(2),$ each of $a\in\mathbb R_+$ defines a unitary irreducible
representation $\pi^a$ of $M(2)$  on $L^2([0,2\pi]).$ In other words, for
$(x,\theta)\in\mathbb R^2\times[0,2\pi],$ the action of $\pi^a$ can be realized by
\[(\pi^a(x,\theta)\varphi)(\omega)=e^{-i\left\langle x,ae^{i\omega}\right\rangle}\varphi(\omega-\theta),\]
where $\varphi\in L^2([0,2\pi]).$ For $g\in L^1(\mathbb R^2),$ consider
$f(x,\theta)=g(x)e^{in\theta}.$ Then
\[(\hat f(a)\varphi)(\omega)=\int_{\mathbb R^2}\int_{0}^{2\pi}
f(x,\theta)e^{-i\left\langle x,ae^{i\omega}\right\rangle}\varphi(\omega-\theta)dxd\theta=\hat g(ae^{i\omega})\varphi(\omega).\]
Hence we infer that, if the Fourier coefficients \[\delta_n(f)=\int_{M(2)} f(x,\theta)e^{in\theta}dxd\theta\neq0\]
for finitely many $n\in\mathbb Z,$ then $\hat f(a)$ is a finite rank operator,
however, $f$ need not be the zero function.
\end{remark}

Next, we prove a uniqueness result for the Fourier transform on $G$
which has a sharp contrast with the Benidicks theorem. That is, the group
Fourier transform of a non-zero bounded Borel measurable function in
$L^1(G)$ can not be compactly supported in $(0,\infty).$

\smallskip

In order to prove this result, we need the following result from \cite{BS}.
Let $\mathbb R^n_+=\left\{\left(x_1,\ldots, x_n\right)\in\mathbb R^n: x_j\geq0;~j=1,\ldots,n\right\}.$
The following result had appeared in the article \cite{BS} by Bagchi and Sitaram, p. 421,
as a part of the proof of Proposition $2.1.$

\begin{proposition}\label{prop1}{\em\cite{BS}}
Let $h$ be a non-zero bounded Borel measurable function which is supported on $\mathbb R^n_+.$
Then $\text{supp } \hat h=\mathbb R^n.$
\end{proposition}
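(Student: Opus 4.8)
The plan is to realize $\hat h$ as the distributional boundary value of a holomorphic function on a tube domain over the orthant, and then use analytic continuation to propagate the vanishing of $\hat h$ on an open set. Since $h$ is bounded and supported in the closed orthant $\mathbb R^n_+$, for every $w = u + iv \in \mathbb C^n$ whose imaginary part $v$ lies in the open orthant $\Omega = (0,\infty)^n$ the integral
\[F(w) = \int_{\mathbb R^n_+} h(x)\, e^{i\langle w, x\rangle}\, dx\]
converges absolutely: one has $\abs{e^{i\langle w,x\rangle}} = e^{-\langle v, x\rangle}$, so the integrand is dominated by $\|h\|_\infty\, e^{-\langle v, x\rangle}$, and $\int_{\mathbb R^n_+} e^{-\langle v,x\rangle}\,dx = \prod_j v_j^{-1} < \infty$. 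First I would verify, by Morera's theorem together with dominated convergence, that $F$ is holomorphic on the tube $T_\Omega = \mathbb R^n + i\Omega$, which is precisely the product of $n$ upper half-planes, and that it obeys the tempered bound $\abs{F(w)} \le \|h\|_\infty \prod_j (\operatorname{Im} w_j)^{-1}$.

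Next I would identify the boundary behaviour. Pairing $F(\cdot + iv)$ against a Schwartz function and passing the limit inside the integral (justified by Fubini and dominated convergence, exactly as above), one checks that $F(\cdot + iv) \to \hat h$ in $\mathcal S'(\mathbb R^n)$ as $v \to 0$ within $\Omega$. I then argue by contraposition: suppose $\operatorname{supp}\hat h \neq \mathbb R^n$, so that $\hat h$ vanishes on some nonempty open box $U = \prod_j (a_j, b_j)$. The goal is to conclude $F \equiv 0$ on $T_\Omega$, for then the uniqueness of the Fourier transform forces $h = 0$, contradicting $h \neq 0$ and thereby proving $\operatorname{supp}\hat h = \mathbb R^n$.

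To establish $F \equiv 0$ I would invoke the principle that a holomorphic function whose distributional boundary value vanishes on an open set continues holomorphically across that set. In one variable this is the familiar statement that a function holomorphic in the upper half-plane with boundary value zero on an interval extends by Schwarz reflection and then vanishes identically by the identity theorem; in the present $n$-variable situation the tube over the orthant splits as a product of upper half-planes, and the corresponding continuation is supplied by the edge-of-the-wedge theorem. Concretely, $\hat h$ being the zero distribution on $U$ means $F$ has no boundary singularity over $U$, hence extends holomorphically to a full neighbourhood of $U$ in $\mathbb C^n$ with vanishing restriction to $U$; since $U$ is a uniqueness set for functions holomorphic on the connected tube $T_\Omega$, the identity theorem yields $F \equiv 0$.

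The hard part will be making this last step fully rigorous, because $h$ is only bounded and not integrable: thus $\hat h$ is genuinely a tempered distribution rather than a function, and $F$ need not belong to any Hardy class, so one cannot simply appeal to classical $H^p$ boundary uniqueness (indeed, already for $h \equiv \mathbf 1_{[0,\infty)}$ in one variable $F(z) = i/z$ fails to lie in $H^1$). The correct tool is therefore the local, distribution-theoretic form of the edge-of-the-wedge theorem for holomorphic functions on tubes over cones, and the care required is precisely in handling the distributional boundary value in the continuation argument. Once $F \equiv 0$ is in hand, the conclusion $h = 0$, and hence $\operatorname{supp}\hat h = \mathbb R^n$, is immediate.
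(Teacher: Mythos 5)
The paper itself gives no proof of this proposition: it is quoted from Bagchi--Sitaram \cite{BS}, where it appears inside the proof of their Proposition 2.1, and the argument there is essentially the one you reconstruct --- holomorphic extension of the Fourier transform to the tube over the open orthant, identification of $\hat h$ as the distributional boundary value, and an edge-of-the-wedge/identity-theorem step to show that vanishing of $\hat h$ on an open set forces $F\equiv 0$ and hence $h=0$. Your outline is correct, and you have rightly flagged the only delicate point, namely that one needs the local, distribution-theoretic form of the edge-of-the-wedge theorem (taking the second wedge function to be $0$ on the opposite tube) rather than any Hardy-class boundary uniqueness.
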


\begin{theorem}\label{th64}
Let $f\in  L^1(G)$ be a bounded Borel measurable function and supported away from the origin
in the first variable. If $\hat f(., \sigma)$ is compactly supported in $\mathbb R_+,$ then
$f=0.$
\end{theorem}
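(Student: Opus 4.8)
The plan is to reduce the statement to Proposition~\ref{prop1} by extracting, from the compact support of $\hat f(\cdot,\sigma)$ in $\mathbb R_+$, the compact support of an ordinary Euclidean Fourier transform in the radial variable, and then to use the hypothesis that $f$ is supported away from the origin in the first variable to place ourselves in the half-line (or half-space) setting where the Bagchi--Sitaram result forces the support of that transform to be everything.

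First I would unwind the definition of $\hat f(a,\sigma)$ via (\ref{exp41}): the operator $(\hat f(a,\sigma)\varphi)(k)=\int_K \mathcal F_1 f(ak\cdot e_n,s)\varphi(s^{-1}k)\,ds$ depends on $a$ only through the values of the Euclidean Fourier transform $\mathcal F_1 f(a\,k\cdot e_n,s)$. Thus $\hat f(\cdot,\sigma)$ being compactly supported in $\mathbb R_+$, say vanishing for $a>R$, should translate into $\mathcal F_1 f(a\omega,s)=0$ for all $a>R$, all $\omega=k\cdot e_n\in S^{n-1}$, and almost every $s$. As $k$ ranges over $K$ the point $k\cdot e_n$ sweeps out all of $S^{n-1}$, so this says precisely that the Euclidean Fourier transform $\mathcal F_1 f(\cdot,s)$ vanishes outside the ball of radius $R$ in $\mathbb R^n$; that is, for almost every $s\in K$ the function $x\mapsto f(x,s)$ has Fourier transform compactly supported in $\mathbb R^n$.

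Next I would invoke the hypothesis that $f$ is supported away from the origin in the first variable, together with boundedness and Borel measurability, to run Proposition~\ref{prop1}. The subtlety is that the proposition is stated for functions supported on the closed octant $\mathbb R^n_+$, whereas ``supported away from the origin'' gives only that $f(\cdot,s)$ vanishes on a neighborhood of $0$. I would bridge this by the standard device of multiplying by a suitable complex exponential (a frequency shift) and/or applying the proposition after an affine change of variables that moves the support of $f(\cdot,s)$ into a half-space: since $\hat f(\cdot,s)=\mathcal F_1 f(\cdot,s)$ is compactly supported, the entire function $\mathcal F_1 f(\cdot,s)$ is of exponential type by Paley--Wiener, and a nonzero function supported in a half-space cannot have compactly supported Fourier transform. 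Concretely, for fixed $s$ let $h=f(\cdot,s)$; if $h\neq0$, then after translating so that $h$ is supported in a half-space $\{x_1\ge c\}$ and applying Proposition~\ref{prop1} (in the form appropriate to a half-space, which is what its proof actually delivers), we conclude $\operatorname{supp}\widehat h=\mathbb R^n$, contradicting compact support. Hence $f(\cdot,s)=0$ for almost every $s$, and therefore $f=0$.

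\emph{The main obstacle} I anticipate is exactly this geometric reconciliation: matching the ``supported away from the origin'' hypothesis of Theorem~\ref{th64} with the half-space hypothesis ``supported on $\mathbb R^n_+$'' of Proposition~\ref{prop1}, and verifying that the one-parameter family indexed by $a\in\mathbb R_+$ and $\omega\in S^{n-1}$ genuinely recovers the full Euclidean spectrum rather than only a radial slice. I would be careful to argue that the vanishing of the operator norm (or Hilbert--Schmidt norm) of $\hat f(a,\sigma)$ for large $a$ forces the pointwise vanishing of $\mathcal F_1 f(ak\cdot e_n,s)$ for a.e.\ $s$ and a.e.\ $k$, using the freedom in $\varphi\in H(K,\mathbb C^{d_\sigma})$ together with the completeness supplied by the Peter--Weyl theory, so that no information along the sphere is lost when passing from the operator to the scalar transform.
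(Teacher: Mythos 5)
Your reduction of the operator statement to a scalar one is sound and is in fact more direct than the paper's: since $\{a\,k\cdot e_n : a>R,\ k\in K\}$ is exactly $\{\xi\in\mathbb R^n : |\xi|>R\}$, compact support of $\hat f(\cdot,\sigma)$ does yield that $\mathcal F_1f(\cdot,s)$ is supported in a ball of $\mathbb R^n$ for a.e.\ $s$ (modulo the point you flag yourself: for a single $\sigma$ the test functions $\varphi$ span only the $\sigma$-covariant part of $L^2(K)$, so strictly one needs the hypothesis for all $\sigma\in\hat M$ --- a point the paper also elides). The paper instead first treats the case where $f$ is radial in the first variable and then reduces the general case to it via the spherical harmonic decomposition (\ref{exp703}), the Hecke--Bochner identity, and orthogonality of the projections $\Pi_lf(\cdot,s)$; your route avoids that machinery entirely.

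The genuine gap is in your concluding step. ``Supported away from the origin in the first variable'' means only that $f(\cdot,s)$ vanishes on a neighborhood of $0$; its support lies in the complement of a ball, a set which is unbounded and surrounds the origin, so no translation (and no modulation, which merely translates the Fourier transform) places it inside a half-space $\{x_1\ge c\}$, let alone inside the octant $\mathbb R^n_+$ of Proposition \ref{prop1}. As written, the appeal to Proposition \ref{prop1} therefore does not go through. The repair is the analyticity argument you gesture at, but with the roles of $f$ and $\mathcal F_1f$ straightened out: it is $f(\cdot,s)$, not $\mathcal F_1f(\cdot,s)$, that Paley--Wiener makes entire of exponential type once $\mathcal F_1f(\cdot,s)$ is compactly supported (note $\mathcal F_1f(\cdot,s)$ is then continuous with compact support, hence integrable, so Fourier inversion applies); a real-analytic function on $\mathbb R^n$ vanishing a.e.\ on a nonempty open set vanishes identically, so $f(\cdot,s)=0$ for a.e.\ $s$ and hence $f=0$. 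With that substitution your proof closes and is arguably cleaner than the paper's; note also that the boundedness hypothesis, which the paper uses to control $H_l=f_l/|x|^l$ in its Hecke--Bochner step, becomes superfluous in your version.
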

\begin{proof}
Suppose $f$ is a radial function in the first variable. Then $\mathcal F_1f(.,s)$ will
be radial and hence
\begin{equation}\label{exp51}
(\hat f(a,\sigma)\varphi)(k) =\int_{K} \mathcal F_1f(a, s)\varphi(s^{-1}k)ds,
\end{equation}
where $\varphi\in H(K, \mathbb C^{d_\sigma}).$ Since $\hat f(., \sigma)$ is compactly
supported in $\mathbb R_+,$ it follows from (\ref{exp51}) that $\mathcal F_1f(.,s)$
is compactly supported in $\mathbb R_+,$ for almost all $s\in K.$ This, in turn,
contradicts Proposition \ref{prop1}. Thus, we conclude that $f=0.$

\smallskip

Since $f\in L^1(G),$ in view of (\ref{exp2}), we can write the spherical harmonic
decomposition of $f$ in the first variable $x=|x|t,~t\in S^{n-1}$ as
\begin{equation}\label{exp703}
f(x,s)=\lim\limits_{p\rightarrow\infty}\sum_{l=0}^pA_l^pf_l(|x|,s)\Pi_lf(t,s),
\end{equation}
where the series on the right-hand side is $\delta$-Cesaro summable. Now, an
application of the Hecke-Bochner identity to (\ref{exp51}) yields
\[(\hat f(a,\sigma)\varphi)(k)=
\int_{K}\lim\limits_{p\rightarrow\infty}\sum_{l=0}^pA_l^pi^{-l}a^{l}\mathcal F_{n+2l}H_l(a, s)\Pi_lf(t,s)\varphi(s^{-1}k)ds,\]
where $\mathcal F_{n+2l}$ is the $(n+2l)$-dimensional Fourier transform of
$H_l=\frac{f_l}{|x|^{l}}.$ Since $\hat f$ is compactly supported, it follows
that
\begin{equation}\label{exp4}
\lim\limits_{p\rightarrow\infty}\sum_{l=0}^pA_l^pi^{-l}a^{l}\mathcal F_{n+2l}H_l(a, s)\Pi_lf(t,s)=0.
\end{equation}
We know that the set $\{\Pi_lf(\cdot,s):~l\in\mathbb Z_+\}$ form an orthogonal set in
$L^2(S^{n-1}),$ from (\ref{exp4}), it is easy to see that
\[\mathcal F_{2+2l}H_l(a,s)\left\|\Pi_lf(.,s)\right\|_2^2=0.\]
Since $f$ is supported away from the origin in the first variable, $H_l$ must be a bounded
Borel measurable function. If $\mathcal F_{2+2l}H_l(a,s)=0,$ then by the  radial
case, we infer that $H_l=0.$ Otherwise, $\left\|\Pi_lf(.,s)\right\|_2=0.$
Thus, it follows from (\ref{exp703}) that $f=0.$
\end{proof}

Further, we prove that a radial function on $G$ can be determined by its
Fourier transform at a single point.
\begin{proposition}\label{prop4}
Let $f\in L^1(G)$ be a radial function in the first variable such that
$\text{sign}(J_0 f)\geq0.$ If $\hat f(a_o,\sigma)=0$ for some $a_o\in\mathbb R_+$ and
a fixed $\sigma\in\hat M,$ then $f=0.$
\end{proposition}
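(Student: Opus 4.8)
The plan is to exploit the radial structure exactly as in the radial case of Theorem~\ref{th64}, turning the operator identity $\hat f(a_o,\sigma)=0$ into a single scalar Bessel--transform identity, and then to let the sign hypothesis convert the vanishing at one frequency into the vanishing of $f$ itself. First I would use radiality in the first variable: since $f(\cdot,s)$ is radial for each $s$, its Euclidean Fourier transform $\mathcal F_1 f(\cdot,s)$ is radial as well, so that (\ref{exp51}) holds and $\hat f(a_o,\sigma)$ is precisely the operator of convolution on $K$ by $G_{a_o}(s):=\mathcal F_1 f(a_o,s)$, namely $(\hat f(a_o,\sigma)\varphi)(k)=\int_K G_{a_o}(s)\varphi(s^{-1}k)\,ds$. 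As in Theorem~\ref{th64}, testing this against sufficiently many $\varphi\in H(K,\mathbb C^{d_\sigma})$, the hypothesis $\hat f(a_o,\sigma)=0$ then forces $G_{a_o}(s)=\mathcal F_1 f(a_o,s)=0$ for almost every $s\in K$.

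Next I would make the Bessel transform explicit. Writing $x=r\,t$ with $t\in S^{n-1}$ and integrating the plane wave over the sphere by means of the $j=0$ case of (\ref{exp16}), the radial transform becomes
\[
\mathcal F_1 f(a_o,s)=c_n\int_0^\infty f(r,s)\,\frac{J_{(n-2)/2}(a_o r)}{(a_o r)^{(n-2)/2}}\,r^{n-1}\,dr=(J_0 f)(a_o,s),
\]
so that the previous step reads $(J_0 f)(a_o,s)=0$ for almost every $s$. This identifies the object $J_0 f$ in the hypothesis with the zeroth-order Bessel (Hankel) transform of the radial profile of $f$.

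Finally I would invoke the sign hypothesis. Since $\mathrm{sign}(J_0 f)\ge 0$ keeps the Bessel-weighted integrand $f(r,s)\,J_{(n-2)/2}(a_o r)$ of one sign, the vanishing of its integral forces $f(r,s)\,J_{(n-2)/2}(a_o r)=0$ for almost every $r$; because $J_{(n-2)/2}(a_o\,\cdot)$ vanishes only on the discrete set of its zeros, this yields $f(r,s)=0$ for almost every $r$ and almost every $s$, i.e.\ $f=0$. The main obstacle, I expect, is twofold. First, one must justify rigorously that the vanishing of the single convolution operator $\hat f(a_o,\sigma)$ on the induced-representation space $H(K,\mathbb C^{d_\sigma})$ really forces the full pointwise vanishing $G_{a_o}\equiv 0$, and not merely of its $\sigma$-isotypic component, which is where the choice of enough test functions $\varphi$ enters. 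Second, the use of the sign hypothesis must be handled with care, since without a definite sign a single zero of a Bessel transform can never force triviality; this positivity is precisely what upgrades one-frequency information to a global conclusion, and is the crux of the argument.
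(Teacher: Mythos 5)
Your proposal is correct and follows essentially the same route as the paper's own proof: radiality turns $\hat f(a_o,\sigma)$ into convolution on $K$ by $\mathcal F_1 f(a_o,\cdot)$, whose vanishing gives the Bessel-transform identity $\int_0^\infty J_0(a_o t)f(t,s)\,t^{n-1}\,dt=0$, and the sign hypothesis then forces $f=0$ since the Bessel factor vanishes only on a countable set. Your version is in fact slightly more careful than the paper's (which writes $J_0$ where the order-$(n-2)/2$ kernel is meant, and silently passes from the vanishing of the convolution operator on $H(K,\mathbb C^{d_\sigma})$ to the vanishing of its kernel — the very point you flag as needing justification).
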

\begin{proof}
For $\varphi\in H(K, \mathbb C^{d_\sigma}),$ we have
\[(\hat f(a_o, \sigma)\varphi)(k)=\int_{K} \mathcal F_1f(a_o,s)\varphi(s^{-1}k)ds.\]
By the hypothesis, $\hat f(a_o, \sigma)\varphi=0,$ it follows that $\mathcal F_1f(a_o, .)=0.$
Hence
\begin{eqnarray*}\label{exp78}
\mathcal F_1f(a_o, s)&=&\int_0^\infty\int_{S^{n-1}}f(|t\omega|,s)e^{-ia_op\cdot t\omega}d\omega t^{n-1}dt\nonumber\\
&=&\int_0^\infty J_0(a_ot)f(t,s)t^{n-1}dt=0.
\end{eqnarray*}
Since $\text{sign}(J_0 f)\geq0$ and the Bessel function $J_0$ can vanish only at
the countably many points,  we conclude that $f=0.$
\end{proof}

\section{Some auxiliary results on compact group}\label{section4}
In this section, we observe some of the properties of a Weyl type transform on the
space $L^1(K)$ as analogous to the Weyl transform on the Heisenberg group, (see \cite{T}).
We use it to work out some uniqueness results for the Fourier transform on the motion
group $G.$

\smallskip

Let $K$ be a compact group. For a function $g\in L^1(K),$ we define an operator $W$
on $H(K, \mathbb C^{d_\sigma})$ by \[W(g)=\int_{K} g(t)\pi(t)dt,\] where $\pi$ is
the left regular representation of $K.$ Then $W(g)$ maps $H(K, \mathbb C^{d_\sigma})$
into $H(K, \mathbb C^{d_\sigma}).$ Now, we derive the Plancheral formula and the
Fourier inversion formula for the transform $W.$

\smallskip

\noindent{\bf{Plancherel formula.}}
For $g\in L^2(K)$ and $\varphi\in H(K, \mathbb C^{d_\sigma}),$ we have
\begin{eqnarray*}
(W(g)\varphi)(k)&=&\int_{K} g(t)(\pi(t)\varphi)(k)dt=\int_{K} g(t)\varphi(t^{-1}k)dt\\
&=&\int_{K} g(s^{-1}k)\varphi(s)ds.
\end{eqnarray*}
Write $\mathcal K_g(s,k)=g(s^{-1}k).$ Then $W(g)$ is an integral operator with the
kernel $\mathcal K_g\in L^2(K\times K).$ Hence $W(g)$ is a Hilbert-Schmidt operator
that satisfying
\[\lVert(W(g)\rVert_{HS}^2=\int_{K\times K}\left|\mathcal K_g(s,k)\right|^2dsdk
=\int_{K\times K}\left|g(s^{-1}k)\right|^2dsdk=\lVert g\rVert_2^2.\]
In other words, $W$ maps $L^1(K)$ onto  $S_2,$ the space of Hilbert-Schmidt
operators on $H(K, \mathbb C^{d_\sigma}).$

\smallskip

Next, we prove the Fourier inversion formula for the transform $W.$
\begin{proposition}\label{prop3}
If $g\in C^2(K),$ then the transform $W$ satisfies the inversion
formula $g(t)=\text{tr}(\pi(t)^*W(g)).$
\end{proposition}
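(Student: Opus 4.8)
The plan is to reduce the claimed identity to the Fourier inversion formula on the compact group $K$, using crucially that $\pi$ is the regular representation (as in the Plancherel computation preceding the statement, I work with $\pi$ on $L^2(K)$). Since $\pi$ is unitary, $\pi(t)^\ast=\pi(t^{-1})$, and the representation property $\pi(t^{-1})\pi(s)=\pi(t^{-1}s)$ together with the substitution $s\mapsto ts$ gives
\[
\pi(t)^\ast W(g)=\int_K g(s)\pi(t^{-1}s)\,ds=\int_K g(ts)\pi(s)\,ds .
\]
Taking the trace and (formally) interchanging it with the integral yields $\mathrm{tr}\bigl(\pi(t)^\ast W(g)\bigr)=\int_K g(ts)\,\mathrm{tr}\bigl(\pi(s)\bigr)\,ds$. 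Since the character of the regular representation is the Dirac mass at the identity, $\mathrm{tr}\,\pi(s)=\sum_{\delta\in\hat K}d_\delta\chi_\delta(s)$, the integral collapses to the value $g(t)$. Equivalently, and more concretely, the integral-kernel viewpoint from the Plancherel paragraph makes the answer transparent: $W(g)$ is the integral operator on $L^2(K)$ whose kernel is the translate $g(ks^{-1})$, so composing with $\pi(t)^\ast$ shifts the kernel to $B(k,s)=g(tks^{-1})$, whose diagonal value $B(k,k)=g(tkk^{-1})=g(t)$ is independent of $k$; integrating the diagonal against the normalized Haar measure then returns $g(t)$.

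To make this rigorous I would pass through the Peter--Weyl decomposition $L^2(K)=\bigoplus_{\delta\in\hat K}M_\delta$ into isotypic components, on which the regular representation acts as $\delta\otimes I_{d_\delta}$. Consequently $W(g)$ acts on $M_\delta$ as $\hat g(\delta)\otimes I_{d_\delta}$ with $\hat g(\delta)=\int_K g(s)\delta(s)\,ds$, and therefore
\[
\mathrm{tr}\bigl(\pi(t)^\ast W(g)\bigr)=\sum_{\delta\in\hat K}d_\delta\,\mathrm{tr}\bigl(\delta(t^{-1})\hat g(\delta)\bigr).
\]
The right-hand side is exactly the Fourier inversion series of $g$ on $K$, so the whole matter reduces to showing that this series converges and sums to $g(t)$.

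The hard part — and the only place the hypothesis $g\in C^2(K)$ is used — is precisely this convergence, equivalently the trace-class justification of the formal steps above. The Plancherel identity only shows $W(g)$ is Hilbert--Schmidt, which is not enough to take a genuine operator trace or to equate the trace with the integral of the diagonal of the kernel. For $g\in C^2(K)$, however, applying the Laplace--Beltrami operator $\Delta$ of $K$ shows $\widehat{\Delta g}(\delta)=\lambda_\delta\,\hat g(\delta)$ with eigenvalues $\lambda_\delta$ growing to infinity, so the Fourier coefficients decay like $\hat g(\delta)=O(\lambda_\delta^{-1})$; combined with the Weyl bound on $d_\delta$ and $\|\delta(t^{-1})\|=1$, this forces the series above to converge absolutely and uniformly, identifies $\pi(t)^\ast W(g)$ as trace class, and legitimizes the interchange of trace and integral. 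Summing the absolutely convergent Fourier series then gives $\mathrm{tr}\bigl(\pi(t)^\ast W(g)\bigr)=g(t)$, which is the desired inversion formula.
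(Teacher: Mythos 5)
Your argument is correct, and its core is identical to the paper's own proof: the paper also computes $\pi(t)^\ast W(g)=W(g^t)$ with $g^t(p)=g(tp)$, observes that $W(g^t)$ is the integral operator with kernel $\mathcal K_{g^t}(s,k)=g^t(s^{-1}k)$ whose diagonal is the constant $g(t)$, and concludes $\mathrm{tr}\bigl(\pi(t)^\ast W(g)\bigr)=\int_K g(t)\,ds=g(t)$. Where you genuinely add something is in the second half: the paper stops at the formal identity ``trace equals integral of the kernel over the diagonal,'' which is only legitimate for trace-class operators, and its proof never actually invokes the hypothesis $g\in C^2(K)$. You correctly isolate this as the real content of the smoothness assumption: passing to the Peter--Weyl decomposition, writing $\mathrm{tr}\bigl(\pi(t)^\ast W(g)\bigr)=\sum_{\delta\in\hat K}d_\delta\,\mathrm{tr}\bigl(\delta(t^{-1})\hat g(\delta)\bigr)$, and using $\widehat{\Delta g}(\delta)=\lambda_\delta\hat g(\delta)$ to force absolute convergence of the Fourier series, which simultaneously shows $\pi(t)^\ast W(g)$ is trace class and identifies the sum with $g(t)$. (One small caveat: on a general compact group a single application of $\Delta$ need not suffice for absolute convergence of the character series --- the exponent of decay required depends on $\dim K$ via the Weyl dimension bound --- so strictly speaking one may need $g\in C^{2m}(K)$ for $m$ large enough, or an appeal to a Sobolev-embedding-type summability criterion; but this affects the paper's statement as much as your proof, and the mechanism you describe is the right one.) In short, your proof subsumes the paper's and repairs its one unjustified step.
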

\begin{proof}
Given that $g\in C^2(K),$
\begin{eqnarray*}
(\pi(t))^*W(g)&=&\int_{K} g(s)(\pi(t))^*\pi(s)ds=\int_{K} g(s)\pi(t^{-1})\pi(s)ds\\
&=&\int_{K} g^t(p)\pi(p)dp=W(g^t),
\end{eqnarray*}
where $g^t(p)=g(tp).$
That is, $W(g)$ is an integral operator with kernel $\mathcal K_g.$ Since the kernel
$\mathcal K_{g^t}$ satisfies $\mathcal K_{g^t}(s,k)=g^t(s^{-1}k),$ we obtain
$\mathcal K_{g^t}(s,s)=g(t)$ and hence
\begin{eqnarray*}
\text{tr}[\pi(t)^*W(g)]&=&\text{tr}(W(g^t))=\int_{K}\mathcal K_{g^t}(s,s)ds\\
&=&\int_{K}g(t)ds=g(t).
\end{eqnarray*}
\end{proof}

Further, by using  the Peter-Weyl  theorem, we prove that if $g\in L^1(K),$ then the
operator $W(g)$ has finite rank as long as $g$ is a trigonometric polynomial.

\begin{proposition}\label{prop2}
Let $g\in L^1(K).$ Then the operator $W(g)$ is of finite rank if and only if $g$
is a trigonometric polynomial on $K.$
\end{proposition}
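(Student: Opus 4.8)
The plan is to read off $\mathrm{rank}\,W(g)$ from the Fourier coefficients of $g$ through the Peter--Weyl theorem, since possessing only finitely many nonzero Fourier coefficients is precisely the definition of a trigonometric polynomial. Everything is governed by the kernel: as computed just before the Plancherel formula, $W(g)$ is the integral operator on $L^2(K)$ with kernel $\mathcal K_g(s,k)=g(s^{-1}k)$, and I would analyse this kernel against the Peter--Weyl basis $\{\psi^\delta_{lu}=\sqrt{d_\delta}\,\phi^\delta_{lu}:1\le l,u\le d_\delta,\ \delta\in\hat K\}$.

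For the easier backward implication, suppose $g$ is a trigonometric polynomial, so that $g=\sum_{\delta\in F_o}\sum_{l,u=1}^{d_\delta}c^\delta_{lu}\phi^\delta_{lu}$ for a finite set $F_o\subset\hat K$. From $\delta(s^{-1}k)=\delta(s)^{*}\delta(k)$ one gets the separation identity $\phi^\delta_{lu}(s^{-1}k)=\sum_{p=1}^{d_\delta}\overline{\phi^\delta_{pl}(s)}\,\phi^\delta_{pu}(k)$, which turns $\mathcal K_g$ into a finite sum of rank-one kernels. Hence $(W(g)\varphi)(k)=\sum c^\delta_{lu}\,\langle\varphi,\phi^\delta_{pl}\rangle\,\phi^\delta_{pu}(k)$ takes values in the finite-dimensional span of the functions $\phi^\delta_{pu}$, so $W(g)$ has finite rank.

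For the converse, the key observation is that a change of variables shows $W(g)$ commutes with left translations $L_t\varphi=\varphi(t^{-1}\cdot)$. By Schur's lemma it therefore preserves every isotypic subspace $M_\delta$ of the regular representation; since $M_\delta$ carries $d_\delta$ copies of $V_\delta$, the restriction of $W(g)$ to $M_\delta$ is governed by the single matrix $\hat g(\delta)=\int_K g(t)\delta(t)\,dt$ and has rank exactly $d_\delta\,\mathrm{rank}\,\hat g(\delta)$. Because the $M_\delta$ are mutually orthogonal (Schur orthogonality, as already used in Lemma \ref{lemma4}), the ranges add without cancellation, giving
\[\mathrm{rank}\,W(g)=\sum_{\delta\in\hat K}d_\delta\,\mathrm{rank}\,\hat g(\delta).\]
If this is finite, then $\hat g(\delta)=0$ for all but finitely many $\delta$, and the Peter--Weyl inversion, which reconstructs $g$ from the matrices $\hat g(\delta)$, then exhibits $g$ as a finite linear combination of matrix coefficients, i.e. a trigonometric polynomial.

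I expect the main obstacle to be the converse, specifically justifying the block-diagonal normal form of $W(g)|_{M_\delta}$ and the rank formula above: one must check that $W(g)$ really acts through $\hat g(\delta)$ alone on each isotypic piece and that distinct $\delta$ contribute independently to the range, so that the rank is the full sum $\sum_\delta d_\delta\,\mathrm{rank}\,\hat g(\delta)$ rather than something smaller. This is exactly where the orthogonality of matrix coefficients of inequivalent representations is indispensable, and it is the step that promotes the coefficientwise vanishing of $\hat g$ to the structural conclusion of the proposition.
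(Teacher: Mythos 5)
Your proposal is correct in substance, but it reaches the harder implication by a genuinely different and more structural route than the paper. For the ``if'' direction you and the paper do essentially the same thing: expand the kernel $\mathcal K_g(s,k)=g(s^{-1}k)$ via $\phi^\delta_{lu}(s^{-1}k)=\sum_p\overline{\phi^\delta_{pl}(s)}\,\phi^\delta_{pu}(k)$ and read off a finite sum of rank-one operators. For the ``only if'' direction the paper passes to $h=g\ast g^{\ast}$, shows $W(h)=W(g)^{\ast}W(g)$ is a positive finite-rank operator, applies the spectral theorem to obtain eigenfunctions satisfying $h\ast\varphi_{j,q}=\lambda_j\varphi_{j,q}$, and then uses Fourier coefficients on $\hat K$ together with the Riemann--Lebesgue lemma to force the eigenfunctions, and finally $g$ itself, to be trigonometric polynomials. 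You instead diagonalize $W(g)$ directly over the Peter--Weyl decomposition $L^2(K)=\bigoplus_\delta M_\delta$ and extract the rank identity $\mathrm{rank}\,W(g)=\sum_\delta d_\delta\,\mathrm{rank}\,\hat g(\delta)$, from which the conclusion is immediate; this is cleaner, quantitatively sharper (it computes the rank instead of merely detecting when it is finite), and it bypasses the autocorrelation trick, the spectral theorem, and the Riemann--Lebesgue step entirely. One slip needs fixing: $W(g)\varphi=g\ast\varphi$ is left convolution, which commutes with \emph{right} translations $R_t\varphi=\varphi(\cdot\, t)$, not with the left translations $L_t$ you invoke. Since each $M_\delta$ is bi-invariant and is equally the $\delta$-isotypic component of the right regular representation, the invariance $W(g)M_\delta\subseteq M_\delta$ and the block computation $g\ast\phi^\delta_{ij}=\sum_p\bigl(\int_Kg(t)\overline{\phi^\delta_{pi}(t)}\,dt\bigr)\phi^\delta_{pj}$ survive verbatim, but you should either change ``left'' to ``right'' or verify the block action directly as above rather than through Schur's lemma. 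Finally, for $g\in L^1(K)$ the last step --- passing from ``all but finitely many $\hat g(\delta)$ vanish'' to ``$g$ is a trigonometric polynomial'' --- rests on the uniqueness theorem for Fourier coefficients of $L^1$ functions (a consequence of Peter--Weyl) rather than on a pointwise inversion formula; this is standard, but worth saying explicitly.
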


\begin{proof}
Consider the function $h=g\ast g^\ast,$ where $g^\ast(t)=\overline{g(t^{-1})}.$
Now, we show that  $W(h)=W(g)^\ast W(g).$ For this, we have
\begin{eqnarray*}
W(h)&=&\int_{K} h(t)\pi(t)dt=\int_{K}(g\ast g^\ast)(t)\pi(t)dt\\
&=&\int_{K}\int_{K}g(s)g^\ast(ts^{-1})\pi(t)dtds\\
&=&\int_{K}g(s)\left(\int_{K}g^\ast(ts^{-1})\pi(t)dt\right)ds.
\end{eqnarray*}
By the change of variables $ts^{-1}=p$ in the inner integral, we get
\begin{eqnarray*}
W(h)&=&\int_{K}g(s)\left(\int_{K}g^\ast(p)\pi(ps)dp\right)ds\\
&=& W(g^\ast) W(g).
\end{eqnarray*}
Further, we require proving $W(g)^\ast=W(g^\ast).$ For $\varphi,\psi\in H(K,\mathbb C^{d_\sigma}),$ consider
\[\left\langle W(g^\ast)\varphi,\psi\right\rangle=\int_{K}\overline{g(t^{-1})}\left\langle\pi(t)\varphi,\psi\right\rangle dt=
\int_{K} \overline{g(s)}\left\langle\pi(s^{-1})\varphi,\psi\right\rangle ds.\]
Since $\pi$ is the left regular representation of $K,$ the operator  $\pi(s)$
will be unitary.
Hence
\[\left\langle W(g^\ast)\varphi,\psi\right\rangle=\int_{K} \left\langle\varphi,g(s)\pi(s)\psi\right\rangle ds=\left\langle\varphi,W(g)\psi\right\rangle=\left\langle W(g)^\ast\varphi,\psi\right\rangle.\]
This, in turn, implies that $W(h)=W(g)^\ast W(g)$ is a positive finite
rank operator. Thus, by the spectral theorem, there exists an orthonormal
set  $\{\varphi_{j}\in H(K,\mathbb C^{d_\sigma}):j=1,\ldots,m\}$ and
scalars $\lambda_j\geq0$ such that
\begin{equation}\label{exp42}
W(h)\varphi=\sum_{j=1}^{m}\lambda_{j}\left\langle \varphi,\varphi_{j} \right\rangle \varphi_{j},
\end{equation}
whenever $\varphi\in H(K,\mathbb C^{d_\sigma}).$ Let $\varphi_{j}=\left(\varphi_{j,1},\ldots,\varphi_{j,d_\sigma}\right).$
Then by (\ref{exp42}), it follows that $h\ast\varphi_{j,q}=\lambda_{j}\varphi_{j,q}.$
By taking Fourier coefficient of both the sides, we get
\[\widehat{\varphi^\delta}_{j,q}(\widehat{h^\delta}-\lambda_{j})=0,\]
where $\delta\in\hat K.$ Then $\widehat{h^\delta}=\lambda_j$ for finitely
many $\delta\in\hat K,$ otherwise, by the Riemann-Lebesgue lemma $\lambda_j=0.$
Hence ${\hat\varphi}_{j,q}(\delta)\neq0$ at most for finitely many
$\delta\in\hat K.$ Thus, by  the Peter-Weyl  theorem, we infer that $\varphi_{j,q}$
is a trigonometric polynomial. Since $h=g^\ast\ast g,$  it follows that
$\hat h(\delta)=|\hat g(\delta)|^2.$ Thus, from (\ref{exp42}), we conclude that
$g$ is a trigonometric polynomial.

\smallskip

Conversely, suppose $g$ is a trigonometric polynomial. Then without loss of
generality, we can assume that $g=\varphi_{ij}^\delta.$ Now, we can write
\[\varphi_{ij}^\delta(t^{-1}s)=\left\langle \delta(t^{-1}s)e_{j},e_i\right\rangle=
\left\langle\delta(s)e_{j},\delta(t)e_i\right\rangle.\]
Since $H_{\delta}$ is $\pi$-invariant,  it follows that
\[\varphi_{ij}^\delta(t^{-1}s)=\sum_{l=1}^{d_\delta}\varphi_{lj}^\delta(s)\overline{\varphi_{li}^\delta(t)}.\]
A straightforward calculation leads to
$W(g)\varphi(s)=d_\delta\left\langle\varphi,\varphi_{ij}^\delta\right\rangle\varphi_{ij}^\delta.$
Thus, we conclude that $W(g)$ is of finite rank.
\end{proof}

\begin{remark}\label{rk3}
In view of the Minkowski integral inequality, it can be easily seen that
$\|W(g)\|\leq\|g\|_1.$ Hence, the spectral radius of the operator $W(g)$
will satisfy the condition $\lambda[\sigma(W(g))]\leq\|g\|_1.$
\end{remark}

Next, by using Proposition \ref{prop2}, we prove that a radial function on the
motion group $G$ can be determined by its group Fourier transform at a
single point. However, for a sake of simplicity, we prove the result for $G=M(2).$
For proving this result, we need the following lemma.

\begin{lemma}\label{lemma1}
Let $f\in L^1(G)$ be such that $f(x,s)=f(|x|,s).$ If $\hat f(a_o, \sigma)$
is of finite rank for some $a_o\in\mathbb R_+,$ then
\[\int_0^\infty J(a_ot) F_t(s)tdt=\sum_{|n|\leq\alpha_o}\int_0^\infty J(a_ot)\hat F_t(n)Y_n(s)tdt,\]
where $F_t(s)=f(t,s).$
\end{lemma}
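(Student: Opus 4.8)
The plan is to reduce the finite-rank hypothesis on $\hat f(a_o,\sigma)$ to the assertion that the single-variable function $s\mapsto\mathcal F_1f(a_o,s)$ is a trigonometric polynomial on $K$, and then simply to read off its Fourier expansion. First I would specialize the formula (\ref{exp41}) to a radial $f$ on $G=M(2)$. Because $\mathcal F_1f(ak\cdot e_n,s)$ then depends only on $a=|ak\cdot e_n|$, formula (\ref{exp41}) collapses to
\[(\hat f(a_o,\sigma)\varphi)(k)=\int_K\mathcal F_1f(a_o,s)\,\varphi(s^{-1}k)\,ds,\]
which, after the substitution $u=s^{-1}k$ (legitimate since $K=SO(2)$ is abelian), is exactly the Weyl-type operator $W(G_{a_o})$ of Section \ref{section4} applied to $G_{a_o}(s):=\mathcal F_1f(a_o,s)$. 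Moreover, the radial Fourier computation already used in Proposition \ref{prop4} (specialized to $n=2$) gives $G_{a_o}(s)=\int_0^\infty J(a_o t)\,F_t(s)\,t\,dt$, so the left-hand side of the claimed identity is precisely $G_{a_o}(s)$.

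The key step is then to invoke Proposition \ref{prop2}. Since $\hat f(a_o,\sigma)=W(G_{a_o})$ is of finite rank by hypothesis, Proposition \ref{prop2} forces $G_{a_o}$ to be a trigonometric polynomial on $K$. Writing its (finite) expansion in the orthonormal system $\{Y_n\}$, there is an integer $\alpha_o$ such that
\[G_{a_o}(s)=\sum_{|n|\le\alpha_o}\widehat{G_{a_o}}(n)\,Y_n(s),\]
with all Fourier modes of order $|n|>\alpha_o$ vanishing. One could equivalently note that, $K$ being abelian, $W(G_{a_o})$ is diagonalized by $\{Y_n\}$ with eigenvalues $\widehat{G_{a_o}}(n)$, so finite rank means only finitely many of these eigenvalues are nonzero; but Proposition \ref{prop2} supplies this conclusion directly.

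Finally I would identify the coefficients. Starting from $G_{a_o}(s)=\int_0^\infty J(a_o t)F_t(s)t\,dt$ and interchanging the $s$-integration with the $t$-integration—justified by Fubini's theorem, since $f\in L^1(G)$—one obtains
\[\widehat{G_{a_o}}(n)=\int_0^\infty J(a_o t)\,\widehat{F_t}(n)\,t\,dt.\]
Substituting this into the finite expansion of $G_{a_o}$ yields
\[\int_0^\infty J(a_o t)F_t(s)t\,dt=\sum_{|n|\le\alpha_o}\left(\int_0^\infty J(a_o t)\,\widehat{F_t}(n)\,t\,dt\right)Y_n(s),\]
which is exactly the stated identity. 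The two places needing care are the Fubini interchange and the verification that the radial reduction of (\ref{exp41}) genuinely coincides with $W(G_{a_o})$; the essential point, however, is the translation of \emph{finite rank} into \emph{trigonometric polynomial}, and this is delivered verbatim by Proposition \ref{prop2}.
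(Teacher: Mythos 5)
Your proposal is correct and follows essentially the same route as the paper's own proof: you reduce the radial case of (\ref{exp41}) to the Weyl-type operator $W(G_{a_o})$, invoke Proposition \ref{prop2} to conclude that $G_{a_o}=\mathcal F_1f(a_o,\cdot)$ is a trigonometric polynomial, and then compute the Fourier coefficients $\widehat{G_{a_o}}(n)=\int_0^\infty J_0(a_ot)\widehat{F_t}(n)\,t\,dt$ via the radial (Bessel) formula and Fubini. The only cosmetic difference is that the paper writes the expansion in characters $\chi_m$ rather than the $Y_n$ of the statement; the substance is identical.
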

\begin{proof}
We know that for $\varphi\in L^2(K, \mathbb C^{d_\sigma})$ we have
\begin{eqnarray*}
(\hat f(a_o, \sigma)\varphi)(k) &=&\int_{\mathbb R^n}\int_{K} f(x,s)e^{-i\left\langle x,a_ok\right\rangle}\varphi(s^{-1}k)dxds\\
&=&\int_{K} \mathcal F_1f(a_ok,s)\varphi(s^{-1}k)ds.
\end{eqnarray*}
Since $f$ is radial in the first variable, then it follows that
\[(\hat f(a_o, \sigma)\varphi)(k)=\left(W\left(\mathcal F_1f(a_o, .\right)\right)\varphi)(k).\]
By the hypothesis, $\hat f(a_o, \sigma)$ is a finite rank operator,
$W\left(\mathcal F_1f(a_o, .\right))$ must be of finite rank. From Proposition \ref{prop2},
we conclude that $\mathcal F_1f(a_o, .)$ is a trigonometric polynomial. That is,
\begin{equation}\label{exp75}
\mathcal F_1f(a_o, s)=\sum_{|m|\leq\alpha_o} \hat G_{a_o}(m)\chi_m(s),
\end{equation}
where $G_{a_o}(s)=\mathcal F_1f(a_o, s).$
On the other hand, we have
\begin{eqnarray}\label{exp76}
\mathcal F_1f(a_o, s)&=&\int_0^\infty\int_{K}f(|t\omega|,s)e^{-iap\cdot t\omega}d\omega tdt\nonumber\\
&=&\int_0^\infty J_0(a_ot)f(t,s) tdt.
\end{eqnarray}
Now, we have
\begin{eqnarray*}
\hat {G_a}(m)&=&\int_{K}\mathcal F_1f(a_o,k)\chi_{-m}(k)dk\\
&=&\int_0^\infty J_0(a_ot)\left(\int_{K}f(t,k)\chi_{-m}(k)dk\right)tdt\\
&=&\int_0^\infty J_0(a_ot)\hat F_t(m)tdt,
\end{eqnarray*}
where $F_t(k)=f(t,k).$ Hence from (\ref{exp75}) we get
\begin{equation}\label{exp79}
\mathcal F_1f(a_o, s)=\sum_{|m|\leq\alpha_o}\int_0^\infty J_0(a_ot)\hat F_t(m)\chi_m(s)tdt.
\end{equation}
By comparing (\ref{exp76}) with (\ref{exp79}), we get the required identity.
\end{proof}

\begin{remark}\label{rk1}
Notice that by taking inverse Fourier transform in both the sides of (\ref{exp75}),
we can assume $f$ is trigonometric polynomial as long as $\hat f(a_o, \sigma)$
is a finite rank operator for some $a_o\in\mathbb R_+$ and $\sigma\in\hat M.$
\end{remark}

\begin{theorem}\label{th10}
Let $f\in L^1(G)$ be a radial function in the first variable which integrates
zero in the second variable. If $\hat f(a_o, \sigma)$ is a finite rank operator
for some $a_o\in\mathbb R_+$ and $\text{sign}(J_0f)\geq0,$ then $f=0.$
\end{theorem}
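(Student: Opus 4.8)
The plan is to combine the structural information from Lemma~\ref{lemma1} with the positivity hypothesis $\text{sign}(J_0 f)\geq 0$ to force the function to vanish. Since $f\in L^1(G)$ is radial in the first variable and $\hat f(a_o,\sigma)$ is a finite rank operator for some $a_o\in\mathbb R_+$, Lemma~\ref{lemma1} gives us the identity
\[
\int_0^\infty J_0(a_o t)\,F_t(s)\,t\,dt=\sum_{|n|\leq\alpha_o}\int_0^\infty J_0(a_o t)\,\hat F_t(n)\,Y_n(s)\,t\,dt,
\]
where $F_t(s)=f(t,s)$. First I would integrate both sides over the second variable $s\in K$. The hypothesis that $f$ integrates to zero in the second variable means $\int_K F_t(s)\,ds=\hat F_t(0)=0$ for almost every $t$. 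On the right-hand side, all the characters $Y_n$ with $n\neq 0$ integrate to zero over $K$, and the $n=0$ term is precisely $\int_0^\infty J_0(a_o t)\,\hat F_t(0)\,t\,dt=0$ by the same vanishing-integral hypothesis.

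The heart of the argument is to exploit the sign condition. The next step is to observe that, after using the zero-integral condition, the constraint from the finite rank hypothesis forces $\mathcal F_1 f(a_o,\cdot)$ to be a trigonometric polynomial supported on finitely many characters, none of which can be the trivial one (since $\hat F_t(0)=0$). I would then pair the identity from Lemma~\ref{lemma1} against the appropriate quantity so that the sign of $J_0 f$ comes into play. The key point is that $\text{sign}(J_0 f)\geq 0$ means $J_0(a_o t)f(t,s)$ does not change sign as a function of $t$ for each fixed $s$, so that the vanishing of the integral $\int_0^\infty J_0(a_o t)f(t,s)\,t\,dt$ forces the integrand to vanish wherever $J_0(a_o t)\neq 0$. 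Since the Bessel function $J_0$ vanishes only at countably many points, this yields $f(t,s)=0$ for almost every $t$ and the relevant $s$.

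More carefully, I would argue as follows. Remark~\ref{rk1} tells us that the finite rank condition lets us assume $f$ is a trigonometric polynomial in the second variable. Combined with the zero-integral condition, each Fourier mode $\hat F_t(n)$ with the surviving indices satisfies a relation of the form $\int_0^\infty J_0(a_o t)\,\hat F_t(n)\,t\,dt=0$. The sign hypothesis on $J_0 f$ should be used mode by mode, or else transferred back to $f$ itself via Fourier inversion, to conclude that the only way such an integral against the non-sign-changing kernel $J_0(a_o t)\,t$ can vanish is for the integrand to be zero almost everywhere. This is the same mechanism already exploited in the proof of Proposition~\ref{prop4}, where the countability of the zero set of $J_0$ guarantees that pointwise vanishing off a null set forces $f=0$.

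The main obstacle I anticipate is the precise interaction between the sign condition and the multi-mode structure: the hypothesis $\text{sign}(J_0 f)\geq 0$ is a condition on $f$ itself, whereas Lemma~\ref{lemma1} naturally produces conditions on the individual Fourier coefficients $\hat F_t(n)$, which are complex-valued and need not inherit a sign. The delicate step is therefore to reduce back from the finitely many modes to a genuine pointwise statement about $J_0(a_o t)f(t,s)$, so that the non-negativity can be invoked legitimately. I expect this reduction to hinge on the zero-integral hypothesis eliminating the constant mode, leaving a configuration in which the sign condition and the countability of the zeros of $J_0$ together pin down $f$ as the zero function; the bookkeeping of which characters survive, and ensuring the sign information is not lost in passing to Fourier coefficients, will be where the real care is required.
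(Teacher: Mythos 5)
Your overall strategy coincides with the paper's: invoke Lemma~\ref{lemma1} and Remark~\ref{rk1}, use the hypothesis $\int_K f(t,k)\,dk=0$ to kill the constant Fourier mode, and then finish with the mechanism of Proposition~\ref{prop4}, namely that $\int_0^\infty J_0(a_ot)f(t,s)\,t\,dt=0$ together with $\text{sign}(J_0f)\geq0$ and the countability of the zeros of $J_0$ forces $f(\cdot,s)=0$. The difficulty is that you never actually establish the vanishing of that integral, and this is the whole content of the theorem. The finite rank hypothesis only tells you (via Proposition~\ref{prop2} and Lemma~\ref{lemma1}) that $s\mapsto \mathcal F_1f(a_o,s)=\int_0^\infty J_0(a_ot)f(t,s)\,t\,dt$ is a trigonometric polynomial $\sum_{|n|\leq\alpha_o}\widehat{G_{a_o}}(n)\chi_n(s)$; the zero-integral hypothesis only removes its $n=0$ coefficient. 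Nothing you have written rules out that $\mathcal F_1f(a_o,\cdot)$ is a nonzero trigonometric polynomial with vanishing mean, in which case the sign argument has no zero integral to act on. Your intermediate claim that each surviving mode satisfies $\int_0^\infty J_0(a_ot)\hat F_t(n)\,t\,dt=0$ is precisely the assertion $\mathcal F_1f(a_o,\cdot)=0$ that needs proof, and your opening move of integrating the identity of Lemma~\ref{lemma1} over $s\in K$ returns the tautology $0=0$ and yields no new information. You flag this reduction as ``where the real care is required,'' but flagging it is not supplying it.

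For comparison, the paper closes this step by reading Lemma~\ref{lemma1} together with Remark~\ref{rk1} as collapsing $\int_0^\infty f(t,s)\,t\,dt$ onto the single constant mode $\int_0^\infty\hat F_t(0)\,t\,dt$, which vanishes by the zero-integral hypothesis; the sign condition and the countability of the zeros of $J_0$ then finish exactly as in Proposition~\ref{prop4}. Whatever one thinks of how explicitly that collapse is justified there, it is the one step your proposal is missing, and without it the argument does not go through.
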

\begin{proof}
In view of Remark \ref{rk1}, from  Lemma \ref{lemma1} we infer that
\[\int_0^\infty\hat F_t(o)tdt=\int_0^\infty f(t,s)tdt.\]
This, in tern, implies that
\[\int_0^\infty\left(\int_{K}f(t,k)dk\right)tdt=\int_0^\infty f(t,s)tdt.\]
Since $f$ integrates zero on $K$ and $\text{sign}(J_0f)\geq0,$ we conclude
that $f=0.$
\end{proof}

\section{ Some results on the Heisenberg uniqueness pairs}\label{section5}
In this section, we explore the Heisenberg uniqueness pairs for the
Fourier transform on the motion group $G$ as well as
on the product group $G'=\mathbb R^n\times K,$ where $K$ is a compact
group. Further, we observed a one to one correspondence between the
class of HUP's on $\mathbb R^n$ and the class of HUP's on $G'.$

\bigskip

Let $\Gamma$ be a smooth surface (or a finite union of smooth surfaces)
in $\mathbb R^n$ and $\Gamma'=\Gamma\times K.$ Let $X(\Gamma')$ be the
space of all finite complex-valued Borel measures $\mu$ in the motion
group $G$ which is supported on $\Gamma'$ and absolutely continuous
with respect to the surface measure on $\Gamma'.$

\smallskip

We define the Fourier transform of $\mu$ on $G$ by

\begin{equation}\label{exp46}
(\hat \mu(a,\sigma)\varphi)(k)=\int_{\Gamma}\int_{K} f(x,s)e^{-i\left\langle x,ak\cdot e_n\right\rangle}\varphi(s^{-1}k)d\mu(x)ds,
\end{equation}
where $a\in\mathbb R^+$ and $\varphi\in H(K, \mathbb C^{d_\sigma}).$

\begin{theorem}
Let $\Gamma'=S^{n-1}\times K,$ where $S^{n-1}$ is the unit sphere in
$\mathbb R^n.$ Suppose $\mu\in X(\Gamma')$ is such that $\hat\mu(a_o,\sigma)=0$
for some $a_o\not\in J_{(n+2l-2)/2}^{-1}(0);\forall~l\in\mathbb Z$ and
$\sigma\in\hat M,$ then $\mu=0.$
\end{theorem}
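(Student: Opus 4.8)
The plan is to treat this as the motion-group analogue of the sphere Heisenberg uniqueness pair (Theorem \ref{th14}), reducing the vanishing of the single operator $\hat\mu(a_o,\sigma)$ to the vanishing of a scalar kernel on the sphere of radius $a_o$, and then playing the Bessel non-vanishing hypothesis against the orthogonality of spherical harmonics, following the computational scheme of Theorem \ref{th8}. Writing $f(\omega,s)$, $\omega\in S^{n-1}$, for the density appearing in (\ref{exp46}) (so that $\mu=0$ is equivalent to $f\equiv0$), I would first expand $f$ in spherical harmonics in the first variable by means of the $\delta$-Cesàro summable expansion (\ref{exp2}), rather than $L^2$ convergence, since the density is only integrable. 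Evaluating the inner spherical integral in (\ref{exp46}) via (\ref{exp16}) with $r=a_o$ and $\xi=k\cdot e_n$, each harmonic $Y_{lj}$ acquires the factor $c_l=i^{l}J_{l+(n-2)/2}(a_o)/a_o^{(n-2)/2}$, so that
\[\mathcal F_1\mu(a_o\,k\cdot e_n,s)=\sum_{l,j}c_l\,a_{lj}(s)\,Y_{lj}(k\cdot e_n).\]
The crucial observation is that the hypothesis $a_o\notin J_{(n+2l-2)/2}^{-1}(0)$ for every $l$ is precisely the statement that $c_l\neq0$ for all $l$.

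I would then substitute this into $(\hat\mu(a_o,\sigma)\varphi)(k)=\int_K\mathcal F_1\mu(a_o\,k\cdot e_n,s)\varphi(s^{-1}k)\,ds=0$ and argue as in the passage (\ref{exp81})--(\ref{exp73}): expanding $\varphi$ and the matrix coefficients $Y_{lj}(k\cdot e_n)$, and invoking the $M$-covariance $\varphi(uk)=\sigma(u)\varphi(k)$ together with Lemma \ref{lemma4} and the Peter--Weyl orthogonality, to deduce that the scalar kernel itself vanishes, i.e. $\mathcal F_1\mu(a_o\,k\cdot e_n,s)=0$ for almost every $k,s\in K$. Since $k\mapsto k\cdot e_n$ pushes the Haar measure on $K$ forward to the surface measure on $S^{n-1}$, the orthogonality of $\{Y_{lj}\}$ in $L^2(S^{n-1})$ then gives $c_l\,a_{lj}(s)=0$ for all $l,j$ and almost every $s$; dividing by the nonzero $c_l$ forces every $a_{lj}(s)=0$, whence $f\equiv0$ and $\mu=0$.

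I expect the main obstacle to be exactly this middle reduction, namely passing from ``$\hat\mu(a_o,\sigma)$ annihilates every $\varphi\in H(K,\mathbb C^{d_\sigma})$'' to ``the scalar kernel $\mathcal F_1\mu(a_o\,k\cdot e_n,s)$ vanishes a.e.''. The delicacy is that the operator acts on the constrained space $H(K,\mathbb C^{d_\sigma})$ for a single $\sigma$, not on all of $L^2(K)$, so one cannot simply read off the kernel; the $M$-covariance must be combined carefully with Lemma \ref{lemma4} and the orthogonality relations in order to separate the distinct harmonics and establish that the test space is rich enough to detect the kernel. By contrast, once this kernel vanishing is in hand, the remainder is routine, with the Bessel hypothesis entering only through the harmless division by $c_l\neq0$.
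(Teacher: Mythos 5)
Your proposal is correct in outline and coincides with the paper's argument for its first half, but the second half takes a genuinely different (more self-contained) route. The paper's proof is short: it writes $d\mu=f\,dt\,ds$ by Radon--Nikodym, uses Fubini to rewrite the hypothesis as $\int_K\mathcal F_1f(a_ok\cdot e_n,s)\varphi(s^{-1}k)\,ds=0$, concludes that the scalar kernel $\mathcal F_1f(a_ok\cdot e_n,s)$ vanishes for almost all $k,s\in K$, identifies $\{a_ok\cdot e_n:k\in K\}$ with the sphere $S^{n-1}_{a_o}(o)$, and then simply invokes Theorem \ref{th14} (Gonzalez Vieli's characterization of $(S^{n-1},S^{n-1}_{a_o}(o))$ as a Heisenberg uniqueness pair) to get $f=0$. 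Your Funk--Hecke computation producing the factors $c_l=i^lJ_{l+(n-2)/2}(a_o)/a_o^{(n-2)/2}$, combined with the Ces\`aro-summable expansion (\ref{exp2}), is precisely the proof of Theorem \ref{th14}, so you are re-deriving inline what the paper outsources; this buys transparency about exactly where the Bessel hypothesis enters, at the cost of length, and note that you could streamline it by applying the projection $\Pi_l$ of (\ref{exp11}) directly to $\mathcal F_1f(a_o\,\cdot\,,s)\vert_{S^{n-1}}$, which equals $c_l\Pi_lf(\cdot,s)$ at once. On the step you flag as the main obstacle --- passing from the vanishing of the operator to the vanishing of the kernel --- be aware that the paper does not engage with it either: it tests against $\varphi\in C(K,\mathbb C^{d_\sigma})$ and asserts the kernel vanishes in one line, which is immediate if $\hat\mu(a_o,\sigma)$ is viewed on all of $L^2(K,\mathbb C^{d_\sigma\times d_\sigma})$ (the space on which $\pi_{a,\sigma}$ is initially defined in (\ref{exp50})), since for fixed $k$ the functions $s\mapsto\varphi(s^{-1}k)$ then exhaust $L^2(K)$; on the restricted space $H(K,\mathbb C^{d_\sigma})$ for a single $\sigma$ your proposed detour through Lemma \ref{lemma4} and the computations of (\ref{exp81})--(\ref{exp73}) is the right instinct but remains a sketch, so your write-up carries the same unfilled gap as the paper rather than introducing a new one.
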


\begin{proof}
Since $\mu$ is absolutely continuous with respect to the surface measure on
$\Gamma',$  by Radon-Nikodym theorem, there exists a function $f\in L^1(\Gamma')$
such that $d\mu=f dsdt.$ By hypothesis, we have
\[(\hat\mu(a_o,\sigma)\varphi)(k)=\int_{S^{n-1}}\int_{K} f(t,s)e^{-i\left\langle t,a_ok\cdot e_n\right\rangle}\varphi(s^{-1}k)dtds=0,\]
whenever $\varphi\in C(K, \mathbb C^{d_\sigma}).$  Now, by Fubini's theorem, we can write
\[\int_{K}\int_{S^{n-1}} f(t,s)e^{-i\left\langle t,a_ok\cdot e_n\right\rangle}dt\varphi(s^{-1}k)ds=
\int_{K}\mathcal F_1f(a_ok\cdot e_n,s)\varphi(s^{-1}k)ds=0.\]
Hence $\mathcal F_1f(a_ok\cdot e_n,s)=0$ for almost all $s,k\in K.$ Since $SO(n)$ can be identified
with $S^{n-1}$ via $k\rightarrow k\cdot e_n,$ it follows that $\mathcal F_1f(y,s)=0$
for almost all $y\in S^{n-1}_{a_o}(o)$ and $s\in K.$ Since we know from Theorem \ref{th14} that the
pair $\left(S^{n-1},S^{n-1}_{a_o}(o)\right)$ is a HUP as long as $J_{(n+2l-2)/2}(a_o)\neq0 $ for all
$l\in\mathbb Z_+,$ we conclude that $\mu=0.$
\end{proof}
\begin{remark}\label{rk41}
Let $(\Gamma,K)$ be a HUP in $\mathbb R^n$ and suppose $\mu\in X(\Gamma')$ is such that
$\hat \mu(a_o)=0$ for some $a_o\not\in J_{(n+2l-2)/2}$ and $\forall~l\in\mathbb Z_+,$
then $\mu=0.$
\end{remark}

\smallskip

The Haar measure on the product group $G'$ is given by $dg=dxdk,$ where $dx$ is
Lebesgue measure on $\mathbb R^n$ and $dk$ is normalized Haar measure on $K.$ Since
the unitary dual group of $G'$ can be parameterized by $\hat G'=\mathbb R^n\times\hat K,$
for each  $(y,\delta)\in\hat G,$ the map $(x,k)\mapsto e^{-2\pi i x\cdot y}\delta(k)$
is a unitary operator on the Hilbert space $\mathcal H_\delta.$ Hence, we can define
the Fourier transform of the function $f\in\L^1(G')$ by
\begin{equation}\label{exp502}
\hat f(y,\delta)=\int_{\mathbb R^n}\int_{K} f(x,k)e^{-2\pi ix\cdot y}\delta(k^{-1})dxdk.
\end{equation}

Let $\Gamma'= \Gamma\times K,$ where $\Gamma$ is a smooth surface (or a finite union
of smooth surfaces) in $\mathbb R^n.$ Let $X(\Gamma')$ be the space of all finite
complex-valued Borel measure $\mu$ in $G'$ which is supported on $\Gamma'$ and
absolutely continuous with respect to the surface measure on $\Gamma'.$ Then by
the Radon-Nikodym theorem, there exists a function $f\in L^1(\Gamma')$ such that
$d\mu=f d\nu dk,$ where $\nu$ is the surface measure on $\Gamma.$

\smallskip

Now, the Fourier transform of the measure $\mu$ can be defined by

\begin{eqnarray}\label{exp52}
\hat\mu(y,\delta) &=&\int_{\Gamma}\int_{K}e^{-2\pi ix\cdot y}\delta(k^{-1})d\mu(x,k)\nonumber\\
&=&\int_{\Gamma}\int_{K} f(x,k)e^{-2\pi ix\cdot y}\delta(k^{-1})d\nu(x)dk.
\end{eqnarray}
\begin{theorem}
The pair $(\Gamma,\Lambda)$ is a Heisenberg uniqueness pairs in $\mathbb R^n$ if and
only if $(\Gamma', \Lambda\times\hat K)$ is a Heisenberg uniqueness pairs in $G'.$
\end{theorem}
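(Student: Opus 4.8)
The plan is to exploit the tensor-product structure of the Fourier transform on $G'=\mathbb R^n\times K$: the exponential $e^{-2\pi i x\cdot y}$ acts only in the Euclidean variable while $\delta(k^{-1})$ acts only in the compact variable, so $\hat\mu(y,\delta)$ decouples into a Euclidean Fourier transform of a $K$-Fourier coefficient. Writing $d\mu=f\,d\nu\,dk$ with $f\in L^1(\Gamma')$ (Radon--Nikodym), I would first set
\[
f_\delta(x)=\int_K f(x,k)\,\delta(k^{-1})\,dk\in\mathbb C^{d_\delta\times d_\delta},
\]
the operator-valued $\delta$-th Fourier coefficient of the slice $f(x,\cdot)$. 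By Fubini's theorem (valid since $f\in L^1$ and $K$ carries finite Haar measure), formula (\ref{exp52}) becomes
\[
\hat\mu(y,\delta)=\int_\Gamma f_\delta(x)\,e^{-2\pi i x\cdot y}\,d\nu(x),
\]
so that $\hat\mu(\cdot,\delta)$ is precisely the Euclidean Fourier transform of the matrix-valued measure $f_\delta\,d\nu$ on $\Gamma$, computed entrywise.

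For the forward implication, assume $(\Gamma,\Lambda)$ is a HUP in $\mathbb R^n$ and that $\hat\mu(y,\delta)=0$ for all $(y,\delta)\in\Lambda\times\hat K$. Fixing $\delta$ and reading off matrix coefficients, each scalar function $x\mapsto\langle f_\delta(x)e_j,e_i\rangle$ lies in $L^1(\Gamma)$ and its Euclidean Fourier transform vanishes on $\Lambda$; the defining property of the HUP $(\Gamma,\Lambda)$ then forces $\langle f_\delta(x)e_j,e_i\rangle=0$ for $\nu$-a.e.\ $x$, hence $f_\delta(x)=0$ $\nu$-a.e. Since $K$ is compact and second countable, $\hat K$ is countable, so the exceptional null sets (one for each $\delta$) may be discarded simultaneously; off their union $f_\delta(x)=0$ for every $\delta\in\hat K$. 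The Peter--Weyl theorem (uniqueness of the Fourier expansion on $K$) then gives $f(x,\cdot)=0$ a.e.\ on $K$ for $\nu$-a.e.\ $x$, whence $f=0$ and $\mu=0$. This shows $(\Gamma',\Lambda\times\hat K)$ is a HUP in $G'$.

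For the converse, assume $(\Gamma',\Lambda\times\hat K)$ is a HUP. Given any $g\in L^1(\Gamma)$ whose Euclidean Fourier transform of $g\,d\nu$ vanishes on $\Lambda$, I would lift it to the slice-constant density $f(x,k)=g(x)$ on $\Gamma'$, which lies in $L^1(\Gamma')$. Then $f_\delta(x)=g(x)\int_K\delta(k^{-1})\,dk$, and by Schur orthogonality $\int_K\delta(k^{-1})\,dk$ is the projection onto the $K$-invariant vectors, which is $0$ for every nontrivial $\delta$ and equals $1$ for the trivial representation $\delta_0$. Hence $\hat\mu(y,\delta)\equiv 0$ for $\delta\neq\delta_0$ automatically, while $\hat\mu(y,\delta_0)=\int_\Gamma g(x)e^{-2\pi i x\cdot y}\,d\nu(x)$ vanishes on $\Lambda$ by hypothesis. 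Thus $\hat\mu$ vanishes on all of $\Lambda\times\hat K$, so the assumed HUP on $G'$ yields $\mu=0$, i.e.\ $g=0$ $\nu$-a.e.; this is exactly the statement that $(\Gamma,\Lambda)$ is a HUP in $\mathbb R^n$.

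The argument is essentially a uniqueness statement for a tensor product of transforms, so no single step is deep; the points requiring care are the Fubini interchange that produces the factorization, and, in the forward direction, the passage from ``$f_\delta=0$ a.e.\ for each individual $\delta$'' to ``$f_\delta=0$ a.e.\ simultaneously for all $\delta$'', which is where the countability of $\hat K$ is used before invoking Peter--Weyl. I expect this combining-of-null-sets step to be the only genuine obstacle; everything else is a direct translation between the Euclidean and compact factors.
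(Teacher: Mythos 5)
Your proof is correct and follows essentially the same route as the paper: both directions rest on the tensor-product factorization $\hat\mu(y,\delta)=\int_\Gamma f_\delta(x)e^{-2\pi i x\cdot y}\,d\nu(x)$, with the forward implication obtained by applying the Euclidean HUP to the $\delta$-th Fourier coefficient of the slices and then invoking Peter--Weyl, and the converse by lifting to a slice-constant density and using that $\int_K\delta(k^{-1})\,dk=0$ for nontrivial $\delta$. The only cosmetic differences are that the paper works with the scalar character projections $f_{k,\sigma}(x)=\int_K f(x,kh^{-1})\chi_\sigma(h)\,dh$ (which equal $\mathrm{tr}(\sigma(k)f_\sigma(x))$, so carry the same information as your matrix entries) rather than the operator-valued coefficient, and that you are more explicit about combining the countably many null sets before applying Peter--Weyl.
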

\begin{proof}
Suppose  $(\Gamma,\Lambda)$ is a Heisenberg uniqueness pair in $\mathbb R^n$ and
$\mu\in X(\Gamma').$ Then by Fubini's theorem, the map $x\mapsto f(x,k)$ belongs
to $L^1(\Gamma,d\nu)$ for almost all $k\in K.$ Hence for $(k, \sigma)\in K\times\hat K,$
we can define the projection $f_{k,\sigma}$ of $f$ by

\begin{eqnarray}\label{exp55}
f_{k,\sigma}(x)=\int_K f(x,kh^{-1})\chi_\sigma(h)dh,
\end{eqnarray}
where $\chi_\sigma=\text{tr }\sigma(~\cdot~),$ the character of the representation
$\sigma.$ Thus, the Euclidean Fourier transform of the projection $f_{k,\sigma}$
gives
\begin{eqnarray}\label{exp56}
\hat f_{k,\sigma}(y) &=& \int_{\Gamma}\int_K f(x,kh^{-1})e^{-2\pi ix\cdot y}\chi_\sigma(h)dh~d\nu(x)\nonumber\\
&=& \text{tr}\int_{\Gamma}\int_K f(x,kh^{-1})\sigma(h)dh~e^{-2\pi ix\cdot y}d\nu(x)\nonumber\\
&=& \text{tr}\int_{\Gamma}\int_K f(x,h)\sigma(h^{-1})dh~e^{-2\pi ix\cdot y}d\nu(x)\sigma(k)\nonumber\\
&=& \text{tr}\left(\hat\mu(y,\sigma)\sigma(k)\right).
\end{eqnarray}
Suppose $\hat \mu|_{\Lambda\times \hat K}=0.$ Since $(\Gamma,\Lambda)$ is a Heisenberg
uniqueness pair in $\mathbb R^n,$ from (\ref{exp56}), it follows that $f_{k,\sigma}=0.$
Hence by the uniqueness of the Fourier series
\[f(x,k)=\sum_{\sigma\in\hat K}d_\sigma f_{k,\sigma}(x)\]
we conclude that $f=0.$

\smallskip

Conversely, suppose $(\Gamma',\Lambda\times \hat K)$ is a Heisenberg
uniqueness pair in $G'.$ Then for $\mu\in X(\Gamma),$ there exists a
function $f\in L^1(\Gamma)$ such that $d\mu=fd\nu.$ Suppose $\hat\mu|_\Lambda=0.$
Then
\[\int_\Gamma f(x)e^{-2\pi i{x\cdot y}}d\nu(x)=0\]
for each $y\in\Lambda.$ This, in turn, implies
\begin{equation}\label{exp13}
\int_\Gamma\int_K f(x)e^{-2\pi i{y\cdot x}}\delta(k^{-1})dk~d\nu(x)=0.
\end{equation}
Now, if we write $d\rho=fd\nu dk,$ then $\rho\in X(\Gamma').$ Since
$(\Gamma',\Lambda\times \hat K)$ is a Heisenberg uniqueness pair,
by (\ref{exp13}), it follows that $\rho=0.$ Thus, using the fact that
group compact group $K$ is unimodular, we conclude that the measure $\mu=0.$
\end{proof}

\bigskip

\noindent{\bf Concluding remarks:}\\

We have shown that if the Fourier transform of a function in $L^1\cap L^2(M(n))$
lands into the space of finite rank operators, then the function has to
vanish. However, it would be an interesting question to consider the case when
the spectrum of the group Fourier transform of a function $L^1(M(n))$ is supported
on a thin uncountable set. We leave this question open for the time being.

\bigskip

\noindent{\bf Acknowledgements:}\\

The authors wish to thank E. K. Narayanan for several fruitful discussions
and remarks. The authors would like to gratefully acknowledge the support
provided by IIT Guwahati, Government of India.

\bigskip


\end{document}